\newtheorem{theorem}{Theorem}[section]
\newtheorem{lemma}{Lemma}[section]
\newtheorem{remark}{Remark}[section]
\newtheorem{condition}{Condition}[section]
\numberwithin{equation}{section}
\newcommand{\al}{\alpha}
\def\Dal{{\partial_t^\al}}
\def\bPtau{\bar\partial_\tau}
\title{\bf Correction of high-order BDF convolution quadrature for fractional evolution equations} 
\author{Bangti Jin\thanks{Department of Computer Science, University College London, Gower Street, London WC1E 6BT, UK
(\texttt{b.jin@ucl.ac.uk})}
\and Buyang Li\thanks{Department of Applied Mathematics, The Hong Kong Polytechnic University, Hung Hom, Hong Kong.
(\texttt{buyang.li@polyu.edu.hk})}
\and Zhi Zhou\thanks{Department of Applied Physics and Applied Mathematics,
Columbia University, 500 W. 120th Street, New York, NY 10027, USA (\texttt{zz2393@columbia.edu})}}
\date{\today}
\begin{document}

\maketitle

\begin{abstract}
We develop proper correction formulas at the starting $k-1$ steps
to restore the desired $k^{\rm th}$-order
convergence rate of the $k$-step BDF convolution quadrature for discretizing evolution equations involving a fractional-order derivative in time.
The desired $k^{\rm th}$-order convergence rate can be achieved even if the source term is not compatible with the initial data, which is allowed to be nonsmooth. We provide complete error estimates for the subdiffusion case $\alpha\in (0,1)$, and sketch the proof for the diffusion-wave case $\alpha\in(1,2)$. Extensive numerical examples are provided to illustrate the effectiveness of the proposed scheme.\\
\textbf{Keywords}: fractional evolution equation, convolution quadrature, initial correction,
backward difference formula, nonsmooth, incompatible data, error estimates
\end{abstract}


\pagestyle{myheadings}
\thispagestyle{plain}

\section{Introduction} \label{sec:intro}

We are interested in the convolution quadrature (CQ) generated by high-order backward difference formulas (BDFs) for solving
the fractional-order evolution equation (with $0<\alpha<1$)
\begin{equation}\label{eqn:fde}
\left\{
\begin{aligned}
   & \Dal (u(t)-v) -  Au(t)= f(t),\quad  0<t<T,\\
   & u(0)=v ,
\end{aligned}
\right.
\end{equation}
where $f$ is a given function, and $\partial_t^\alpha u$ denotes the left-sided Riemann-Liouville fractional time derivative of order $\alpha$, defined by (cf. \cite{KilbasSrivastavaTrujillo:2006})
\begin{equation}\label{McT}
   \partial_t^\alpha u(t):= \frac{1}{\Gamma(1-\al)} \frac{d}{dt} \int_0^t(t-s)^{-\al}u(s)\, ds,
\end{equation}
where $\Gamma(z):=\int_0^\infty s^{z-1}e^{-s}ds$ is the Gamma function.
Under the initial condition $u(0)=v$, the Riemann-Liouville fractional derivative $\Dal (u-v)$ in the
model \eqref{eqn:fde} is identical with the usual Caputo fractional derivative \cite[pp. 91]{KilbasSrivastavaTrujillo:2006}.

In the model \eqref{eqn:fde}, the operator $A$ denotes either the Laplacian $\Delta$ on a polyhedral
domain $\Omega\subset \mathbb R^d\,(d=1,2,3)$ with a homogenous Dirichlet boundary condition, or its discrete approximation $\Delta_h$ by Galerkin finite element method. Thus the operator $A$ satisfies the following
resolvent estimate (cf. \cite[Example 3.7.5 and Theorem 3.7.11]{ABHN} and \cite{Thomee:2006})
\begin{equation}\label{eqn:resolvent-estimate}
  \| (z -A)^{-1} \|_{{L^2(\Omega)\rightarrow L^2(\Omega)}}\le c_\phi z^{-1},  \quad \forall z \in \Sigma_{\phi},
\end{equation}
for all $\phi\in (\pi/2,\pi)$, where $\Sigma_\theta:=\{z\in\mathbb{C}\setminus\{0\}: |\arg z|< \theta\}$
is a sector of the complex plane $\mathbb{C}$. The model \eqref{eqn:fde}
covers a broad range of applications related to anomalous diffusion discovered in the past two decades, e.g.,
conformational dynamics of protein molecules, contaminant transport in complex geological formations
and relaxation in polymer systems; see \cite{SokolovKlafterBlumen:2002}.

There has been much recent interest in developing high-order schemes for problem \eqref{eqn:fde}, especially
spectral methods \cite{LiXu:2009,ZayernouriKarniadakis:2014, Zayernouri:2015uni, ChenXuHesthaven:2015,ChenShenWang:2016}
and discontinuous Galerkin \cite{Deng:2015local, Mustapha:2015, MustaphaAbdallahFurati:2014, Mustapha:2014hp}. In this work, we develop robust high-order schemes
based on CQs generated by high-order BDFs. The CQ developed by Lubich \cite{Lubich:1986,Lubich:1988,Lubich:2004} provides a flexible framework
for constructing high-order methods to discretize the fractional derivative $\partial_t^\alpha u$. By its very
construction, it inherits the stability properties of linear multistep methods, which greatly facilitates the
analysis of the resulting numerical scheme, in a way often strikingly opposed to standard quadrature formulas
\cite[pp. 504]{Lubich:2004}. Hence, it has been widely applied to discretize the model \eqref{eqn:fde} and its variants,
especially the CQ generated by BDF1 and BDF2 (with BDF$k$ denoting BDF of order $k$).
In the literature, the CQ generated by BDF1 is commonly known as the Gr\"unwald-Letnikov formula.

By assuming that the solution is sufficiently smooth, which is equivalent to assuming smoothness of the initial
data $v$ and imposing certain \textit{compatibility} conditions on the source term $f$ at $t=0$, the stability and
convergence of the numerical solutions of fractional evolution equations have been investigated in
\cite{CuestaPalencia:2003,GaoSunSun:2015,WangVong:2014,YusteAcedo:2005,ZengLiLiuTurner:2013}.
In general, if the source term $f$ is not compatible with the given initial data, the solution $u$ of the model
\eqref{eqn:fde} will exhibit weak singularity at $t=0$, which will deteriorate the convergence rate of the
numerical solutions. This has been widely recognized in fractional ODEs \cite{DiethelmFordFreed:2004,
GalucioDeuMengu:2006} and PDEs \cite{CuestaLubichPalencia:2006,JinLazarovZhou:SISC2016,Sanz-Serna1988}. In particular, direct implementation of the CQ generated by high-order BDFs for discretizing the fractional evolution equations generally only yields first-order accuracy. To restore the theoretical
rate $O(\tau^k)$ of BDF$k$, two different strategies have been proposed.

For fractional ODEs, one idea is to use starting weights \cite{Lubich:1986} to correct the CQ in discretizing
the fractional time derivative, cf. \eqref{eqn:CQ} below:
\begin{equation*}
   \bar\partial_\tau^\alpha \varphi^n = \frac{1}{\tau^\alpha}\sum_{j=0}^n b_{n-j}\varphi^j + \sum_{j=0}^M w_{n,j}\varphi^j.
\end{equation*}
The starting term $\sum_{j=0}^M w_{n,j}u_j$ is to capture all leading singularities so
as to recover a \textit{uniform} $O(\tau^k)$ rate of the scheme, where $M\in\mathbb{N}$ and the
weights $w_{n,j}$ generally depend on both $\alpha$ and $k$. This approach works well for
fractional evolution ODEs, however, the extension of this approach to fractional evolution PDEs
relies on expanding the solution into power series of $t$, which requires imposing certain
compatibility conditions on the source term.

The second idea is to split the source term $f$ into $f(t)=f(0) + (f(t)-f(0))$ and to approximate $f(0)$ by
$\bar\partial_\tau \partial_t^{-1}f(0)$, with a similar treatment of the initial data $v$. This leads to
a corrected BDF2 at the first step and restores the $O(\tau^2)$ accuracy for any fixed $t_n>0$.
The idea was first introduced in \cite{LubichSloanThomee:1996} for solving a variant of
formulation \eqref{eqn:fde} in the diffusion-wave case and then systematically developed in
\cite{CuestaLubichPalencia:2006} for BDF2, and was recently extended to the model \eqref{eqn:fde} in
\cite{JinLazarovZhou:SISC2016} for both subdiffusion and diffusion-wave cases. Higher-order extension of
this approach is possible, but is still not available in the literature.

The goal of this work is to develop robust high-order BDFs for fractional evolution equations along the second strategy \cite{CuestaLubichPalencia:2006,JinLazarovZhou:SISC2016}. Instead of extending this strategy to each
high-order BDF method, separately, we develop a systematic strategy for correcting initial steps for high-order BDFs,
based on a few simple criteria, cf. \eqref{crit:mu} and \eqref{crit:b} for the model \eqref{eqn:fde}. These criteria
emerge naturally from solution representations, and are purely algebraic in nature and straightforward to construct.
The explicit correction coefficients will be given for BDFs up to order $6$. For BDF$k$, the correction is only
needed at the starting $k-1$ steps and thus the resulting scheme is easy to implement.

We develop proper corrections for high-order BDFs for both subdiffusion, i.e., $\alpha\in(0,1)$, and diffusion wave, i.e.,
$\alpha\in(1,2)$. It is noteworthy that for $\alpha\in(1,2)$, high-order BDFs can be either
unconditionally or conditionally stable, depending on the fractional order $\alpha$, and in
the latter case, an explicit CFL condition on the time step size $\tau$ is given. Theoretically, the corrected BDF$k$ achieves
the $k^{\rm th}$-order accuracy at any fixed time $t=t_n$ (when $t_n$ is bounded from below),
and the error bound depends only on data regularity, without assuming any compatibility conditions
on the source term or extra regularity on the solution  (cf. Theorems \ref{thm:conv} and \ref{thm:conv-dw}).
These results are {supported} by the numerical experiments in Section \ref{sec:numerics}.

The rest of the paper is organized as follows. In Section \ref{sec:scheme} we develop the correction
for the subdiffusion case, including the motivations of the algebraic criteria for choosing the correction coefficients.
The extension of the approach to the diffusion wave case is given in Section \ref{sec:diff-wave}. Numerical results are
presented in Section \ref{sec:numerics} to illustrate the efficiency and robustness of the corrected schemes. Appendix
\ref{app:correction-Lubich} gives an alternative interpretation of our correction method in terms of Lubich's convolution
quadrature for operator-valued convolution integrals. Some lengthy
proofs are given in Appendices B, C and D. Throughout, the notation $c$ denotes a generic positive constant,
whose value may differ at each occurrence, but it is always independent of the time step size $\tau$ and the solution $u$.

\section{BDFs for Subdiffusion and its Correction}\label{sec:scheme}

Let $\{t_n=n\tau\}_{n=0}^N$ be a uniform partition of the interval $[0,T]$,
with a time step size $\tau=T/N$. The CQ generated by BDF$k$, $k=1,\ldots,6$,
approximates the fractional derivative
$\partial_t^\alpha \varphi(t_n)$ by
\begin{equation}\label{eqn:CQ}
\bar\partial_\tau^\alpha \varphi^n:=
\frac{1}{\tau^\alpha}\sum_{j=0}^n b_j \varphi^{n-j},
\end{equation}
with $\varphi^n=\phi(t_n)$, where the weights $\{b_j\}_{j=0}^\infty$ are the coefficients in the power series expansion
\begin{equation}\label{eqn:delta}
\delta_\tau(\zeta)^\alpha
=\frac{1}{\tau^\alpha}\sum_{j=0}^\infty b_j\zeta ^j \quad \mbox{with}\quad \delta_\tau(\zeta ):=
\frac{1}{\tau}
\sum_{j=1}^k \frac {1}{j} (1-\zeta )^j.
\end{equation}
Below we often write $\delta(\zeta)=\delta_1(\zeta)$. The coefficients $b_j$ can be computed efficiently
by the fast Fourier transform \cite{Podlubny:1988,Sousa:2012} or recursion \cite{Wu:2014}. Correspondingly,
the BDF for solving \eqref{eqn:fde} seeks approximations $U^n$, $n=1,\dots,N$, to the exact solution $u(t_n)$ by
\begin{equation}\label{eqn:BDF-CQ-0}
\bPtau^\alpha (U-v)^n  -  A U^n = f(t_n) .
\end{equation}
If the solution $u$ is smooth and has sufficiently many vanishing derivatives at $0$, then
$U^n$ converges at a rate of $O(\tau^k)$ \cite{Lubich:1986,Lubich:2004}. However, it generally only
exhibits a first-order accuracy when solving fractional evolution equations, due to the weak
solution singularity at $0$, even if the initial data $v$ and source term $f$ are smooth
\cite{SakamotoYamamoto:2011}. This has been observed numerically
\cite{CuestaLubichPalencia:2006,JinLazarovZhou:SISC2016}. For $\alpha=1$, BDF$k$
is known to be $A(\vartheta_k)$-stable with angle $\vartheta_k= 90^\circ$, $90^\circ$, $86.03^\circ$,
$73.35^\circ$, $51.84^\circ$, $17.84^\circ$ for $k = 1,2,3,4,5,6$, respectively \cite[pp. 251]{HairerWanner:1996}.

To restore the $k^{\rm th}$-order accuracy, we correct BDF$k$ at the starting $k-1$ steps by
(as usual, the summation disappears if the upper index is smaller than the lower one)
\begin{equation}\label{eqn:BDF-CQ}
\begin{aligned}
&\bPtau^\alpha (U-v)^n  - A U^n = a_n^{(k)} (A v+f(0))+f(t_n)+
\sum_{\ell=1}^{k-2} b_{\ell,n}^{(k)}\tau^{\ell} \partial_t^{\ell}f(0) ,
&&1\le n\le k-1,\\
&\bPtau^\alpha (U-v)^n  -  A U^n = f(t_n) , &&k\le n\le N.
\end{aligned}
\end{equation}
where $a_n^{(k)}$ and $b_{\ell,n}^{(k)}$ are coefficients to be determined below. They are
constructed so as to improve the accuracy of the overall scheme to $O(\tau^k)$ for a general
initial data $v\in D(A)$ and a possibly incompatible right-hand side $f$.
The only difference between \eqref{eqn:BDF-CQ} and the standard scheme \eqref{eqn:BDF-CQ-0}
lies in the correction terms at the starting $k-1$ steps. Hence, the proposed scheme \eqref{eqn:BDF-CQ} is easy to implement.

\begin{remark}\label{rmk:f-ell}
In the scheme \eqref{eqn:BDF-CQ}, the derivative $\partial_t^{\ell}f(0)$ may be replaced by its
$(k-\ell-1)$-order finite difference approximation $f^{(\ell)}$, without sacrificing its accuracy.
\end{remark}

\begin{remark}\label{rmk:correction-Lubich}
The correction in \eqref{eqn:BDF-CQ} is minimal in the sense that there is no
other correction scheme which modifies only the $k-1$ starting steps
while retaining the $O(\tau^k)$ convergence. This does not
rule out corrections with more starting steps. We give an interesting correction  closely related
to \eqref{eqn:BDF-CQ} in Appendix \ref{app:correction-Lubich}.
\end{remark}

\subsection{Derivation of the correction criteria}
Now we derive the criteria for choosing the coefficients $a_j^{(k)}$ and
$b_{\ell,j}^{(k)}$, cf. \eqref{crit:mu} and \eqref{crit:b}, using Laplace transform and its
discrete analogue, the generating function \cite{LubichSloanThomee:1996,Thomee:2006}. We denote by
$~~\widehat{}~~$ taking Laplace transform, and for a given sequence $(f^n)_{n=0}^\infty$,
denote by $\widetilde f(\zeta )$ the generating function, which is defined by
$\widetilde f(\zeta ) := \sum_{n=0}^\infty f^n\zeta ^n.$
First we split the right hand side $f$ into
\begin{equation}\label{decomp-rhs}
 f(t)=  f(0) + \sum_{\ell=1}^{k-2} \frac{t^{\ell} }{\ell !}\partial_t^{\ell}f(0) + R_{k},
\end{equation}
and $R_k$ is the corresponding local truncation error, given by
\begin{equation}\label{eqn:Rk}
R_{k}
= f(t)-  f(0)- \sum_{\ell=1}^{k-2} \frac{t^{\ell} }{\ell !}\partial_t^\ell f(0)
=\frac{t^{k-1}}{(k-1)!}\partial_t^{k-1}f(0)
+\frac{t^{k-1}}{(k-1)!}*\partial_t^{k}f,
\end{equation}
where $\ast$ denotes Laplace convolution. Thus the function $w(t):=u(t)-v$ satisfies
\begin{equation}
    \Dal w -Aw = A v +f(0)
    + \sum_{\ell=1}^{k-2} \frac{t^{\ell}}{\ell !} \partial_t^\ell f (0) + R_{k},
\end{equation}
with $w(0)=0$. Since $w(0)=0$, the identity $\widehat{\partial_t^\alpha w}(z)=z^\alpha \widehat{w}(z)$
holds \cite[Remark 2.8, pp. 84]{KilbasSrivastavaTrujillo:2006}, and thus by Laplace transform, we obtain
\begin{equation*}
    z^\al \widehat w(z)  -A\widehat w(z) = z^{-1} (Av  +f(0))+
    \sum_{\ell=1}^{k-2} \frac{1}{z^{\ell+1}} \partial_t^\ell f (0) + \widehat R_{k}(z).
\end{equation*}
By inverse Laplace transform, the function $w(t)$ can be readily represented by
\begin{equation}\label{eqn:semisol}
\begin{aligned}
  w(t)=  &\frac{1}{2\pi \mathrm{i}}\int_{\Gamma_{\theta,\delta}}
  e^{zt}K(z)\big(Av  +f(0)\big)dz
  +\frac{1}{2\pi \mathrm{i}}\int_{\Gamma_{\theta,\delta}}
  e^{zt} zK(z)\bigg(\sum_{\ell=1}^{k-2} \frac{1}{z^{\ell+1}} \partial_t^\ell f (0)
    + \widehat R_{k}(z)\bigg)dz,
    \end{aligned}
\end{equation}
with the kernel function
\begin{equation}\label{eqn:kernel}
   K(z)= z^{-1}(z^\al-A)^{-1}.
\end{equation}
In the representation \eqref{eqn:semisol}, the contour $\Gamma_{\theta,\delta}$ is defined by
\begin{equation*}
  \Gamma_{\theta,\delta}=\left\{z\in \mathbb{C}: |z|=\delta, |\arg z|\le \theta\right\}\cup
  \{z\in \mathbb{C}: z=\rho e^{\pm\mathrm{i}\theta}, \rho\ge \delta\},
\end{equation*}
oriented with an increasing imaginary part.
Throughout, we choose the angle $\theta$ such that $ \pi/2 < \theta < \min(\pi,\pi/\al)$ and hence
$z^{\al} \in \Sigma_{\theta'}$ with $ \theta'=\al\theta< \pi$ for all $z\in\Sigma_{\theta}$.
By the resolvent estimate \eqref{eqn:resolvent-estimate}, there exists a constant $c$ which depends only on $\theta$ and $\al$ such that
\begin{equation}\label{eqn:resol}
  \| (z^{\al}-A)^{-1} \| \le cz^{-\al} \quad \mbox{and}\quad \| K(z)\|\le c|z|^{-1-\alpha},  \quad \forall z \in \Sigma_{\theta}.
\end{equation}

Next, we give a representation of the discrete solution $W^n:=U^n-v$, which
follows from lengthy but simple computations, cf. Appendix \ref{app:sol-rep}.
\begin{theorem}\label{thm:solurep}
The discrete solution $W^n:=U^n-v$ is represented by
\begin{equation}\label{eqn:Rep-Wh}
\begin{aligned}
  W^n&=
   \frac{1}{2\pi\mathrm{i} }\int_{\Gamma^\tau_{\theta,\delta}}e^{zt_{n}}
  \mu(e^{-z\tau}) K( \delta_\tau(e^{-z\tau}))(A v+ f(0))\,dz\\
  &+\frac{1}{2\pi\mathrm{i} }\int_{\Gamma^\tau_{\theta,\delta}}e^{zt_{n}}
  \delta_\tau(e^{-z\tau})K(\delta_\tau(e^{-z\tau}))
  \sum_{\ell=1}^{k-2}\bigg(\frac{\gamma_\ell(e^{-z\tau})}{\ell !}
  +\sum_{j=1}^{k-1} b_{\ell,j}^{(k)}e^{-zt_j}\bigg)\tau^{\ell+1} \partial_t^\ell f (0)\,dz \\
&+\frac{1}{2\pi\mathrm{i} }\int_{\Gamma^\tau_{\theta,\delta}}e^{zt_{n}}
  \delta_\tau(e^{-z\tau})K( \delta_\tau(e^{-z\tau}))\tau \widetilde  R_{k}(e^{-z\tau})
\,dz ,
  \end{aligned}
\end{equation}
with the contour
$\Gamma_{\theta,\delta}^\tau :=\{ z\in \Gamma_{\theta,\delta}:|\Im(z)|\le {\pi}/{\tau} \}$
{\rm(}oriented with an increasing imaginary part{\rm)},
where the functions $\mu(\zeta ) $ and $\gamma_\ell(\zeta )$ are respectively defined by
\begin{equation}\label{eqn:gamma-mu}
  \mu(\zeta ) = \delta(\zeta) \bigg( \frac{\zeta }{1-\zeta } + \sum_{j=1}^{k-1}a_j^{(k)} \zeta ^j\bigg)
  \quad \mbox{and}\quad \gamma_\ell(\zeta )=\bigg(\zeta  \frac{d}{d\zeta }\bigg)^\ell\frac{1}{1-\zeta }.
\end{equation}
\end{theorem}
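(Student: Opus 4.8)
The plan is to pass to generating functions and then invert back by a contour integral. First I would set $W^n=U^n-v$ and rewrite the corrected scheme \eqref{eqn:BDF-CQ} in terms of $W$. Since $\bPtau^\al(U-v)^n=\bPtau^\al W^n$ and $AU^n=AW^n+Av$, the scheme becomes $\bPtau^\al W^n-AW^n=g^n$ with $W^0=0$, where the consolidated right-hand side $g^n$ absorbs $Av$ together with the original forcing (extending the second line of \eqref{eqn:BDF-CQ} to all $n\ge k$). Using the Taylor split \eqref{decomp-rhs} to write $f(t_n)=f(0)+\sum_{\ell=1}^{k-2}\frac{t_n^\ell}{\ell!}\partial_t^\ell f(0)+R_k(t_n)$ for every $n$, the terms regroup so that the coefficient of $(Av+f(0))$ equals $1+a_n^{(k)}$ (with the convention $a_n^{(k)}:=0$ for $n\ge k$), the coefficient of $\partial_t^\ell f(0)$ equals $\frac{t_n^\ell}{\ell!}+b_{\ell,n}^{(k)}\tau^\ell$ (likewise $b_{\ell,n}^{(k)}:=0$ for $n\ge k$), plus a remainder $R_k(t_n)$; note that $g^0=0$ is consistent with $W^0=0$.

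Next I would introduce the generating function $\widetilde W(\zeta)=\sum_{n\ge0}W^n\zeta^n$. The crucial point is that the CQ weight is a genuine discrete convolution, so $\sum_n(\bPtau^\al W^n)\zeta^n=\delta_\tau(\zeta)^\al\widetilde W(\zeta)$ by \eqref{eqn:delta}, independently of $W^0$. Hence the recurrence collapses to the algebraic identity $(\delta_\tau(\zeta)^\al-A)\widetilde W(\zeta)=\widetilde g(\zeta)$, and writing the resolvent through the kernel \eqref{eqn:kernel} as $(\delta_\tau(\zeta)^\al-A)^{-1}=\delta_\tau(\zeta)K(\delta_\tau(\zeta))$ gives $\widetilde W(\zeta)=\delta_\tau(\zeta)K(\delta_\tau(\zeta))\,\widetilde g(\zeta)$.

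The third step is to compute $\widetilde g$ termwise. The geometric series $\sum_{n\ge1}\zeta^n=\zeta/(1-\zeta)$ shows that the coefficient of $(Av+f(0))$ in $\widetilde g$ is $\zeta/(1-\zeta)+\sum_{j=1}^{k-1}a_j^{(k)}\zeta^j$; since $t_n^\ell=(n\tau)^\ell$ and $\sum_{n\ge0}n^\ell\zeta^n=\gamma_\ell(\zeta)$ by \eqref{eqn:gamma-mu}, the coefficient of $\partial_t^\ell f(0)$ is $\tau^\ell\big(\gamma_\ell(\zeta)/\ell!+\sum_{j=1}^{k-1}b_{\ell,j}^{(k)}\zeta^j\big)$; and the remainder contributes exactly $\widetilde R_k(\zeta)$.

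Finally I would invert via $W^n=\frac{1}{2\pi\mathrm i}\oint_{|\zeta|=\rho}\zeta^{-n-1}\widetilde W(\zeta)\,d\zeta$ on a circle $\rho<1$ inside the radius of convergence of the power-series parts, then substitute $\zeta=e^{-z\tau}$ and use $\zeta^{-n-1}\,d\zeta=-\tau e^{zt_n}\,dz$ to produce a single factor $\tau$. Combining this $\tau$ with $\delta_\tau=\tau^{-1}\delta$ assembles $\mu(\zeta)=\delta(\zeta)\big(\zeta/(1-\zeta)+\sum_{j}a_j^{(k)}\zeta^j\big)$ in the first term, promotes $\tau^\ell$ to $\tau^{\ell+1}$ in the second, and leaves $\tau\widetilde R_k$ in the third, which is precisely \eqref{eqn:Rep-Wh}. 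Under $\zeta=e^{-z\tau}$ the circle maps to a vertical segment $\{\Re z=\mathrm{const},\ |\Im z|\le\pi/\tau\}$, so the remaining and main task is the deformation of this segment onto $\Gamma^\tau_{\theta,\delta}$. I expect this to be the chief obstacle: one must check that the integrand is analytic throughout the swept region, which rests on verifying that $\delta_\tau(e^{-z\tau})^\al$ stays in the sector $\Sigma_\theta$ so that the resolvent bound \eqref{eqn:resol} applies (this is exactly where the $A(\vartheta)$-stability of BDF$k$ is used), that the only singularities of $\mu$ and $\gamma_\ell$ lie at $\zeta=1$, i.e.\ $z=0$, which the small arc of radius $\delta$ encircles, and that the contributions along the horizontal cuts $\Im z=\pm\pi/\tau$ cancel by the $2\pi/\tau$-periodicity of $e^{-z\tau}$.
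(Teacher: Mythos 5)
Your proposal is correct and follows the paper's own proof essentially step for step: the same regrouping of the corrected scheme for $W^n$ with coefficients $1+a_n^{(k)}$ and $t_n^\ell/\ell!+b_{\ell,n}^{(k)}\tau^\ell$ (vanishing for $n\ge k$), the same generating-function identity $\widetilde W(\zeta)=\delta_\tau(\zeta)K(\delta_\tau(\zeta))\,\widetilde g(\zeta)$ via the convolution rule and the sums \eqref{eqn:basic-sum}, and the same Cauchy inversion on $|\zeta|=\varrho$ with the substitution $\zeta=e^{-z\tau}$ followed by deformation of the vertical segment onto $\Gamma^\tau_{\theta,\delta}$. The verifications you flag as the chief obstacle are precisely those the paper carries out in Appendix \ref{app:sol-rep} via Lemma \ref{lem:delta} (sectorial containment of $\delta_\tau(e^{-z\tau})^\alpha$, resting on $A(\vartheta_k)$-stability and the non-vanishing of $\delta(\zeta)/(1-\zeta)$ near the unit circle) together with the coincidence of the integrand's values on $\Im z=\pm\pi/\tau$, so no genuinely different route is taken.
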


By comparing the kernel functions in \eqref{eqn:semisol} and \eqref{eqn:Rep-Wh},
we deduce that in order to have $O(\tau^k)$ accuracy, the following three conditions should be satisfied for $z\in\Gamma_{\theta,\delta}^\tau$:
\begin{equation*}
  \begin{aligned}
    &|\delta_\tau(e^{-z\tau})-z|\leq c|z|^{k+1}\tau^k,\qquad
     |\mu(e^{-z\tau})-1| \leq c|z|^k\tau^k,\quad \\
    & \bigg|\bigg(\frac{\gamma_\ell(e^{-z\tau})}{\ell !}
  +\sum_{j=1}^{k-1} b_{\ell,j}^{(k)}e^{-zt_j} \bigg)\tau^{\ell+1}- \frac{1}{z^{\ell+1}}\bigg| \leq c|z|^{k-\ell-1}\tau^{k}.
  \end{aligned}
\end{equation*}
Note that for BDF$k$, the estimate $|\delta_\tau(e^{-z\tau})-z|\leq c|z|^{k+1}\tau^k$ holds automatically (cf. Lemma \ref{lem:delta} in Appendix \ref{app:sol-rep}). It suffices to impose the following algebraic criteria (changing $e^{-z\tau}$ to $\zeta$ and $z\tau $ to $1-\zeta$): for BDF$k$, choose the coefficients $\{a_j^{(k)}\}_{j=1}^{k-1}$ and $\{b_{\ell,j}^{(k)}\}_{j=1}^{k-1}$ such that
\begin{align}
  |\mu(\zeta )-1| & \leq c|1-\zeta |^k,\label{crit:mu}\\
  \bigg|\frac{\gamma_\ell(\zeta )}{\ell !}
  + \sum_{j=1}^{k-1} b_{\ell,j}^{(k)} \zeta ^j-\frac{1}{\delta(\zeta )^{\ell+1}} \bigg|
 &\le c |1-\zeta |^{k-\ell-1}, \quad \ell=1,\ldots,k-2, \label{crit:b}
\end{align}
where the functions $\mu(\zeta)$ and $\gamma_\ell(\zeta)$ are defined in \eqref{eqn:gamma-mu}. It can be
verified that for BDF$k$, $k=3,\ldots,6$, the leading singularities on the left
hand side of \eqref{crit:b} do cancel out, and thus the criterion can be satisfied.

\subsection{Computation of the coefficients $a_j^{(k)}$ and $b_{\ell,j}^{(k)}$}\label{ssec:coef-sub}
First we compute the coefficients $a_j^{(k)}$. To this end, we
rewrite $\sum_{j=1}^{k-1}a_j^{(k)}\zeta ^j$ as
\begin{align}\label{formula_aj}
\sum_{j=1}^{k-1}a_j^{(k)}\zeta ^j=\zeta \sum_{j=0}^{k-2}c_j (1-\zeta )^j.
\end{align}
Consequently, by writing $\zeta = 1- (1-\zeta)$, expanding the summation and collecting terms, we obtain (with the convention $c_{-2}=c_{-1}=0$)
\begin{align*}
\mu(\zeta )&=\sum_{j=1}^k \frac {1}{j} (1-\zeta )^j\bigg(\frac{\zeta }{1-\zeta } + \zeta  \sum_{j=0}^{k-2}c_j (1-\zeta )^j \bigg)\\
&=\sum_{j=0}^{k-1} \frac {1}{j+1} (1-\zeta )^j \bigg(1-(1-\zeta )-\sum_{j=0}^{k}c_{j-2} (1-\zeta )^j
+\sum_{j=0}^{k-1}c_{j-1} (1-\zeta )^j \bigg)\\
&=\sum_{j=0}^{k-1} \frac {1}{j+1} (1-\zeta )^j \sum_{j=1}^{k} \frac {1}{j} (1-\zeta )^j
-\sum_{j=2}^{k-1}\bigg(\sum_{\ell=0}^{j}\frac {1}{j-\ell+1}c_{\ell-2}\bigg) (1-\zeta )^j \\
&\quad +\sum_{j=1}^{k-1}\bigg(\sum_{\ell=0}^{j}\frac {1}{j-\ell+1}c_{\ell-1}\bigg)(1-\zeta )^j
+O\big((1-\zeta )^k\big)\\
&=1+\sum_{j=1}^{k-1} \bigg(\frac {1}{j+1} - \frac {1}{j}-\sum_{\ell=0}^{j}\frac {1}{j-\ell+1}c_{\ell-2}+\sum_{\ell=0}^{j}\frac {1}{j-\ell+1}c_{\ell-1} \bigg)(1-\zeta )^j +O\big((1-\zeta )^k\big) \\
&=1+\sum_{j=1}^{k-1} \bigg(-\frac {1}{j(j+1)}-\sum_{\ell=1}^{j-1}\frac {1}{j-\ell}c_{\ell-1}+\sum_{\ell=0}^{j-1}\frac {1}{j-\ell}c_{\ell} \bigg)(1-\zeta )^j +O\big((1-\zeta )^k\big).
\end{align*}
Thus by choosing $c_\ell$, $\ell=0,\dots,k-2$, such that
\begin{align} \label{formula_cj}
\begin{aligned}
\sum_{\ell=0}^{j-1}\frac {1}{j-\ell}c_{\ell}=
\frac {1}{j(j+1)} +\sum_{\ell=1}^{j-1}\frac {1}{j-\ell}c_{\ell-1},
\quad j=1,\dots,k-1 ,
\end{aligned}
\end{align}
Criterion \eqref{crit:mu} follows.
The coefficients $a_j^{(k)}$ can be computed recursively from \eqref{formula_cj}
and \eqref{formula_aj}, and are given in Table \ref{tab:an}.
It is worth noting that the result for $k=2$ recovers exactly the correction
in \cite{JinLazarovZhou:SISC2016}, and thus our algebraic construction generalizes
the approach in \cite{JinLazarovZhou:SISC2016}.

\begin{table}[htb!]
\caption{The coefficients $a_j^{(k)}$ computed by \eqref{formula_aj}}
\label{tab:an}
\centering
     \begin{tabular}{|c|ccccc|}
\hline
      order of BDF &  $a_1^{(k)}$  & $a_2^{(k)}$  & $a_3^{(k)}$ & $a_4^{(k)}$ & $a_5^{(k)}$   \\[2pt]
\hline
       $k=2$       & $ \frac{1}{2}$ & & & & \\
\hline
       $k=3$          &$\frac{11}{12}$  & $-\frac{5}{12}$    &   &  &  \\[2pt]
 \hline
       $k=4$          &$\frac{31}{24}$  & $-\frac{7}{6}$  & $\frac{3}{8}$ &  &  \\[2pt]
\hline
       $k=5$          &$\frac{1181}{720}$  & $-\frac{177}{80}$   & $\frac{341}{240}$ & $-\frac{251}{720}$  & \\[2pt]
\hline
       $k=6$          &$\frac{2837}{1440}$& $-\frac{2543}{720}$   &$\frac{17}{5}$ & $-\frac{1201}{720}$ & $\frac{95}{288}$  \\[2pt]
\hline
     \end{tabular}
\end{table}

Next we compute the coefficients $b_{\ell,j}^{(k)}$. First we expand  $\frac{\gamma_\ell(\zeta )}{\ell !}-
\frac{1}{\delta(\zeta )^{\ell+1}}$ in $1-\zeta $ as
\begin{align}\label{Taylor-gln}
\frac{\gamma_\ell(\zeta )}{\ell !}-\frac{1}{\delta(\zeta )^{\ell+1}}
=\sum_{j=0}^{k-\ell-2} g_{\ell,j}^{(k)}(1-\zeta )^j + O(|1-\zeta |^{k-\ell-1}),
\end{align}
and then choose the coefficients $b_{\ell,j}^{(k)}$, $j=1,\dots,k-1$ to satisfy \eqref{crit:b}.
To this end, we rewrite $\sum_{j=1}^{k-1} b_{\ell,j}^{(k)} \zeta ^j$ into the following form:
\begin{align}\label{compute-bln}
\sum_{j=1}^{k-1} b_{\ell,j}^{(k)} \zeta ^j=\zeta \sum_{j=0}^{k-2} d_{\ell,j}^{(k)}(1- \zeta )^j
=\sum_{j=0}^{k-2} d_{\ell,j}^{(k)}(1- \zeta )^j-\sum_{j=1}^{k-1} d_{\ell,j-1}^{(k)}(1- \zeta )^j.
\end{align}
Then it suffices to choose
\begin{subequations}\label{compute-dln}
\begin{align}
&d_{\ell,0}^{(k)}=-g_{\ell,0}^{(k)},\\
&d_{\ell,j}^{(k)}=d_{\ell,j-1}^{(k)}-g_{\ell,j}^{(k)} &&\mbox{for}\,\,\, j=1,\dots,k-\ell-2,\\
&d_{\ell,j}^{(k)}=0 &&\mbox{for}\,\,\, j=k-\ell-1,\dots,k-2 .
\end{align}
\end{subequations}
Now the coefficients $b_{\ell,j}^{(k)}$ can be computed recursively using
\eqref{Taylor-gln}, \eqref{compute-dln} and \eqref{compute-bln}, and
the results are given in Table \ref{tab:bln}.
Note that for $k=4$ and $6$, the coefficients $b_{k-2,j},j=1,2\ldots,k-1$ vanish identically.

\begin{table}[htb!]
\caption{The coefficients $b_{\ell,j}^{(k)}$.}
\label{tab:bln}
\centering
     \begin{tabular}{|c c|ccccc|}
\hline
      order of BDF &   & $b_{\ell,1}^{(k)}$  & $b_{\ell,2}^{(k)}$  & $b_{\ell,3}^{(k)}$ & $b_{\ell,4}^{(k)}$ & $b_{\ell,5}^{(k)}$     \\[2pt]
\hline
       $k=3$          &$\ell=1$   &$\frac{1}{12}$  &  0   &  &    &  \\[2pt]
\hline
       $k=4$          &$\ell=1$   &$\frac{1}{6}$  & $-\frac{1}{12}$    & $0$   &  &  \\[3pt]
                           &$\ell=2$   & $0$                &  $0$  & $0$   & & \\[2pt]
\hline
       $k=5$          &$\ell=1$   &$\frac{59}{240}$  & $-\frac{29}{120}$   & $\frac{19}{240}$ & $0$ &  \\[3pt]
                           &$\ell=2$   & $\frac{1}{240}$                 & $-\frac{1}{240}$  & $0$ & $0$ &  \\[3pt]
                           &$\ell=3$   &$\frac{1}{720}$ & $0$  & $0$ & $0$ & \\[2pt]
\hline
       $k=6$          &$\ell=1$   &$\frac{77}{240}$& $-\frac{7}{15}$  &$\frac{73}{240}$ & $-\frac{3}{40}$  & 0\\[3pt]
                           &$\ell=2$   & $\frac{1}{96}$             & $-\frac{1}{60}$  & $\frac{1}{160}$  & $0$ & 0 \\[3pt]
                           &$\ell=3$   & $-\frac{1}{360}$ &  $\frac{1}{720}$ & $0$ & $0$  & 0\\[3pt]
                           &$\ell=4$   & $0$ & $0$ & $0$ & $0$  & 0\\[2pt]
\hline
     \end{tabular}
\end{table}

\subsection{Error estimates}
Last we state the error estimate for \eqref{eqn:BDF-CQ}. The proof relies on the splitting $u(t_n)-U^n
=w(t_n)-W^n$ and the representations \eqref{eqn:semisol} and \eqref{eqn:Rep-Wh}, and then bounding each
term using \eqref{eqn:resol}. The details can be found in Appendix \ref{app:conv-sub}.
\begin{theorem}\label{thm:conv}
Let Criteria \eqref{crit:mu} and \eqref{crit:b} hold. Then
for the solution $U^n$ to the corrected scheme \eqref{eqn:BDF-CQ}, the following error estimate holds for any $t_n>0$
\begin{equation*}
  \begin{aligned}
  \|U^n-u(t_n)\|_{L^2(\Omega)}  \leq & c\tau^k \bigg(t_n^{ \alpha -k }  \|f(0)+Av\|_{L^2(\Omega)} + \sum_{\ell=1}^{k-1} t_n^{ \alpha+\ell -k }  \|\partial_t^{\ell}f(0)\|_{L^2(\Omega)}\\
   &+\int_0^{t_n}(t_n-s)^{\alpha-1}\|\partial_s^{k}f(s)\|_{L^2(\Omega)}ds\bigg).
  \end{aligned}
\end{equation*}
\end{theorem}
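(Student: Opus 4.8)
The plan is to start from the identity $u(t_n)-U^n=w(t_n)-W^n$ and subtract the representations \eqref{eqn:semisol} and \eqref{eqn:Rep-Wh} term by term. Both expansions split into three groups matching the source decomposition \eqref{decomp-rhs}: the $(Av+f(0))$ group, the groups carrying $\partial_t^\ell f(0)$ for $\ell=1,\dots,k-2$, and the remainder group built from $R_k$. For each group I would first replace the full contour $\Gamma_{\theta,\delta}$ in \eqref{eqn:semisol} by the truncated one $\Gamma_{\theta,\delta}^\tau$ and bound the discarded tail, then compare the two integrands over the common contour $\Gamma_{\theta,\delta}^\tau$ (the ``main part''). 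Throughout I would take $\delta=1/t_n$, so that on the rays $z=\rho e^{\pm\mathrm{i}\theta}$ one has $|e^{zt_n}|=e^{-c\rho t_n}$ and every scalar integral reduces, after $s=\rho t_n$, to $\int_{1/t_n}^\infty e^{-c\rho t_n}\rho^{\beta}\,d\rho\le c\,t_n^{-\beta-1}$; this is the device that turns a symbol bound of the form $c|z|^{\beta}\tau^k$ into the factor $\tau^k t_n^{-\beta-1}$ in the final estimate. The tails, on which $|\Im z|>\pi/\tau$ forces $|z|\gtrsim 1/\tau$ and hence $|e^{zt_n}|\le e^{-c|z|t_n}$, are controlled by the very same bounds after using $e^{-x}\le c_k x^{-k}$, so they never dominate.

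For the first group the main part is governed by
\[
\|K(z)-\mu(e^{-z\tau})K(\delta_\tau(e^{-z\tau}))\|\le\|K(z)-K(\delta_\tau(e^{-z\tau}))\|+|1-\mu(e^{-z\tau})|\,\|K(\delta_\tau(e^{-z\tau}))\|.
\]
The first summand is estimated by the mean value theorem, using $\|K'(z)\|\le c|z|^{-2-\alpha}$ (which follows from \eqref{eqn:resol}) and the consistency estimate $|\delta_\tau(e^{-z\tau})-z|\le c|z|^{k+1}\tau^k$ of Lemma \ref{lem:delta}; the second uses criterion \eqref{crit:mu} together with $|1-e^{-z\tau}|\le c|z|\tau$ on $\Gamma_{\theta,\delta}^\tau$. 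Both give $c|z|^{k-1-\alpha}\tau^k$, and integration with $\delta=1/t_n$ yields $c\tau^k t_n^{\alpha-k}\|Av+f(0)\|$. The $\partial_t^\ell f(0)$ groups are treated in the same way: using $\delta_\tau=\tau^{-1}\delta$, criterion \eqref{crit:b} replaces $\bigl(\tfrac{\gamma_\ell(e^{-z\tau})}{\ell !}+\sum_j b_{\ell,j}^{(k)}e^{-zt_j}\bigr)\tau^{\ell+1}$ by $\delta_\tau(e^{-z\tau})^{-(\ell+1)}$ up to an error of size $c|z|^{k-\ell-1}\tau^k$, and a further mean value comparison of $\delta_\tau^{-\ell}K(\delta_\tau)$ with $z^{-\ell}K(z)$ produces the symbol bound $c|z|^{k-\ell-1-\alpha}\tau^k$, hence $c\tau^k t_n^{\alpha+\ell-k}\|\partial_t^\ell f(0)\|$.

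The remainder group is split, following \eqref{eqn:Rk}, into the polynomial part $\tfrac{t^{k-1}}{(k-1)!}\partial_t^{k-1}f(0)$ and the convolution part $\tfrac{t^{k-1}}{(k-1)!}*\partial_t^k f$. The polynomial part carries the generating function $\tfrac{\tau^{k-1}}{(k-1)!}\gamma_{k-1}(e^{-z\tau})\partial_t^{k-1}f(0)$, and the decisive fact is that the leading singularities of $\tfrac{\gamma_{k-1}(\zeta)}{(k-1)!}$ and $\delta(\zeta)^{-k}$ cancel, leaving $\tfrac{\gamma_{k-1}(\zeta)}{(k-1)!}-\delta(\zeta)^{-k}=O(1)$ as $\zeta\to1$ (this is exactly the borderline case of \eqref{crit:b} at the uncorrected index $\ell=k-1$, where the right-hand side is merely a constant). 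Combined with $\|z^{-k}(z^\alpha-A)^{-1}\|\le c|z|^{-k-\alpha}$ and its derivative bound, this gives the symbol estimate $c|z|^{-\alpha}\tau^k$ and the surviving summand $c\tau^k t_n^{\alpha-1}\|\partial_t^{k-1}f(0)\|$, matching the $\ell=k-1$ term. The convolution part is the essential one: the continuous contribution equals $\int_0^{t_n}E(t_n-s)\partial_s^k f(s)\,ds$ with $E=\mathcal{L}^{-1}[z^{1-k}K(z)]$ satisfying $\|E(t)\|\le ct^{k+\alpha-1}$, and the goal is to identify the discrete contribution as a discrete convolution against a kernel $E_\tau$ (the convolution quadrature approximation of the same convolution) and to prove that the resulting error kernel obeys $\|(E-E_\tau)(t)\|\le c\tau^k t^{\alpha-1}$; convolving against $\partial_t^k f$ then yields $c\tau^k\int_0^{t_n}(t_n-s)^{\alpha-1}\|\partial_s^k f(s)\|\,ds$.

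The main obstacle is exactly this last kernel estimate. The difficulty is that the discrete representation carries the generating function $\tau\widetilde R_k(e^{-z\tau})$ of the \emph{sampled} exact remainder, whereas the continuous one carries the Laplace transform $\widehat R_k(z)$; reconciling these and extracting the gain of $\tau^k$ at the cost of $k$ powers of $t$ relative to the $O(t^{k+\alpha-1})$ size of $E$ is where the convolution quadrature error theory \cite{Lubich:2004} (in the operator-valued form of Appendix \ref{app:correction-Lubich}) is needed and where the argument is longest. Summing the four contributions—the $(Av+f(0))$ term, the $\ell=1,\dots,k-2$ terms, the polynomial $\ell=k-1$ term, and the convolution integral—then gives the asserted bound.
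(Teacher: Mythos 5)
Your proposal is correct and follows essentially the same route as the paper's proof in Appendix~\ref{app:conv-sub}: the same splitting into the $(Av+f(0))$ term, the $\partial_t^\ell f(0)$ terms, the contour tail, and the remainder term (itself split into the polynomial $\partial_t^{k-1}f(0)$ part, handled via the cancellation in \eqref{Taylor-gln}, and the convolution part), with the same choice $\delta=t_n^{-1}$ and the same symbol estimates as in Lemma~\ref{lem:kernel}, the only cosmetic difference being your mean-value bound via $\|K'(z)\|\le c|z|^{-2-\alpha}$ in place of the paper's resolvent identity. Your deferral of the final convolution-kernel estimate to operator-valued convolution quadrature error theory matches the paper, which at the same point invokes the argument of \cite[Lemma 3.7]{JinLiZhou:CN}.
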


\begin{remark}
Theorem \ref{thm:conv} implies that for any fixed $t_n>0$, the convergence rate is $O(\tau^k)$ for BDF$k$.
In order to have a \emph{uniform} rate $O(\tau^k)$, the following compatibility conditions are needed:
\begin{equation*}
    f(0) + Av = 0, \quad \mbox{and}\quad     \partial_t^{(\ell)} f(0) = 0, \quad \ell=1,\ldots,k-1,
\end{equation*}
concurring with known results on convolution quadrature \cite{Lubich:2004}. In the absence of these conditions, the error estimate
deteriorates as $t\to0$, which is consistent with the corresponding regularity theory: the solution {\rm(}and its
derivatives{\rm)} exhibits weak singularity at $t=0$  \cite{SakamotoYamamoto:2011}.
\end{remark}

\begin{remark}\label{rmk:error-nonsmooth}
The error estimate in Theorem \ref{thm:conv} requires $Av\in L^2(\Omega)$, i.e., the initial data
$v$ is reasonably smooth. Upon minor modifications of the proof in Appendix \ref{app:conv-sub}, one can derive a similar error
estimate for $v\in L^2(\Omega)$:
\begin{equation*}
  \|U^n-u(t_n)\|_{L^2(\Omega)}  \leq c\tau^k \bigg( t_n^{-k} \|v\|_{L^2(\Omega)} + \sum_{\ell=0}^{k-1} t_n^{ \alpha+\ell -k }  \|\partial_t^{\ell}f(0)\|_{L^2(\Omega)}+\int_0^{t_n}(t_n-s)^{\alpha-1}\|\partial_s^{k}f(s)\|_{L^2(\Omega)}ds\bigg).
\end{equation*}
\end{remark}

\section{Corrected BDF for diffusion-wave problem}\label{sec:diff-wave}

Now we extend the strategy in Section \ref{sec:scheme} to the diffusion-wave problem, i.e., $1<\alpha<2$:
\begin{equation*}
  \partial_t^\alpha (u(t)-v-tb) -Au(t) = f(t),
\end{equation*}
with the initial conditions $u(0)=v$ and $u^\prime(0)=b$, where
\begin{equation}
   \partial_t^\alpha u(t):= \frac{1}{\Gamma(2-\al)}\frac{d^2}{dt^2} \int_0^t(t-s)^{1-\al}u(s)\, ds .
\end{equation}
The main differences from the subdiffusion case lie in
the extra initial condition $b$ and better temporal smoothing property \cite{JinLazarovZhou:SISC2016}. A straightforward
implementation of BDF$k$ can fail to yield the  $O(\tau^k)$ rate, as the subdiffusion case, and further requires unnecessarily
high regularity on $f$. We shall develop a corrected scheme to take care of both issues. First, in order to fully exploit
the extra smoothing, we rewrite the {source term} $f$ as $f=\partial_tg$ with $g=\partial_t^{-1}f$.
Then the diffusion-wave equation can be rewritten as
\begin{equation}\label{eqn:diff-wave}
  \partial_t^\alpha (u-v-tb) - Au = \partial_t g,
\end{equation}
Next we correct the starting $k-1$ steps, and seek approximations
$U^n$, $n=1,\dots,N$, by
\begin{equation}\label{eqn:BDF-CQ-dw}
\begin{aligned}
&\bPtau^\alpha (U-v-tb)^n  - A U^n = a_n^{(k)} A v + c_{n}^{(k)}\tau  Ab + \bar\partial_\tau g^n +
\sum_{\ell=1}^{k-2} b_{\ell,n}^{(k)}\tau^{\ell-1} \partial_t^{\ell-1} f(0) ,
&& 1\le n\le k-1,\\
&\bPtau^\alpha (U-v-tb)^n  -  A U^n = \bar \partial_\tau g^n , &&k\le n\le N.
\end{aligned}
\end{equation}
The scheme involves $\bar\partial_\tau g^n$, instead of $f^n$,
which enables one to relax the regularity requirement on $f$.
The correction terms are to ensure the desired $O(\tau^k)$ rate.

Now we derive the criterion for choosing the coefficients in \eqref{eqn:BDF-CQ-dw} using Laplace
transform and generating function. First, since $g(0)=0$, $g(t)$ can be split into
\begin{equation}\label{eqn:decomp-rhs-dw}
   g(t)= \sum_{\ell=1}^{k-2} \frac{t^{\ell} }{\ell !}\partial_t^{\ell}g(0) + R_{k}=\sum_{\ell=1}^{k-2} \frac{t^{\ell} }{\ell !}\partial_t^{\ell-1}f(0) +R_{k},
\end{equation}
where $R_k$ is the local truncation error
$R_{k} =\frac{t^{k-1}}{(k-1)!}\partial_t^{k-1}g(0) +\frac{t^{k-1}}{(k-1)!}\ast\partial_t^{k}g(t).$
With the splitting \eqref{eqn:decomp-rhs-dw}, the function $w=u-v-tb$ satisfies
\begin{equation*}
  \partial_t^\alpha w - Aw = Av +tAb + \sum_{\ell=1}^{k-2} \partial_t \frac{t^{\ell} }{\ell !}\partial_t^{\ell-1}f(0)+ \partial_t R_{k}.
\end{equation*}
Then by Laplace transform, we derive a representation of the continuous solution $w(t)$:
\begin{equation}\label{eqn:sol-rep-dw-cont}
   w(t) =  \frac{1}{2\pi \mathrm{i}}\int_{\Gamma_{\theta,\delta}}  e^{zt}K(z) (Av + z^{-1}Ab) dz
  +\frac{1}{2\pi \mathrm{i}}\int_{\Gamma_{\theta,\delta}}
  e^{zt} zK(z) \bigg(\sum_{\ell=1}^{k-2} \frac{1}{z^{\ell}} \partial_t^{\ell-1}f(0)
    + z \widehat R_{k}(z)\bigg)dz, \vspace{-15pt}
\end{equation}
where the angle $\theta \in (\pi/2,\pi)$ is sufficiently close to $\pi/2$ such that
$\alpha\theta<\pi$, and $\delta$ is small.

Since BDF$k$ is $A(\vartheta_k)$-stable, the scheme \eqref{eqn:BDF-CQ-dw} is unconditionally
stable for any $\alpha <\alpha^*(k):= \pi/(\pi-\vartheta_k)$. The critical value $\alpha^*(k)$ is $1.91$,
$1.68$, $1.40$ and $1.11$ for $k=3,\ldots,6$. In contrast, for $\alpha\geq \alpha^*(k)$, it is
only conditionally stable. Note that for any $\alpha\in(1,2)$,
the curve $\delta(e^{-\mathrm{i}\theta})^\alpha$ is not tangent to the real axis at the origin (i.e., $\theta$
close to zero). This naturally gives rise to the following condition.
\begin{condition}\label{assump:CFL}
Let $r(A)$ be the numerical radius of $A$, and the following condition holds:
{\rm(i)} The fractional order $\alpha<\alpha^*(k)$ or
{\rm(ii)} The fractional order $\alpha\ge \alpha^*(k)$ and $\tau^\alpha r(A) \leq c(\alpha,k)-\gamma$ for some $\gamma>0$,
where the constant $c(\alpha,k)$ is given by the intersection point of
$\{\delta(\zeta)^\alpha: |\zeta|=1\}$ with the negative real axis {\rm(}closest to the origin{\rm)}.
\end{condition}

\begin{remark}\label{rmk:stable}
Condition \ref{assump:CFL}(ii) specifies the CFL condition on the time step size $\tau$ {\rm(}so it
holds only if $r(A)<\infty${\rm)}. The CFL constant $c(\alpha,k)$ is not available in closed form,
but can be determined numerically; see Fig. \ref{fig:cfl} for the values.
\end{remark}

It is interesting to observe the qualitative differences of BDFs of different order. For example, the CFL constant $c(\alpha,6)$
of BDF6 does not approach zero even for $\alpha$ tends to $2$; and there is an interval of $\alpha$
values for which the CFL constant $c(\alpha,4)$ for BDF4 is larger than $c(\alpha,3)$ for BDF3, i.e., BDF4 is less stringent in
time step size.

\begin{figure}[hbt!]
  \centering
  \includegraphics[width=0.48\textwidth]{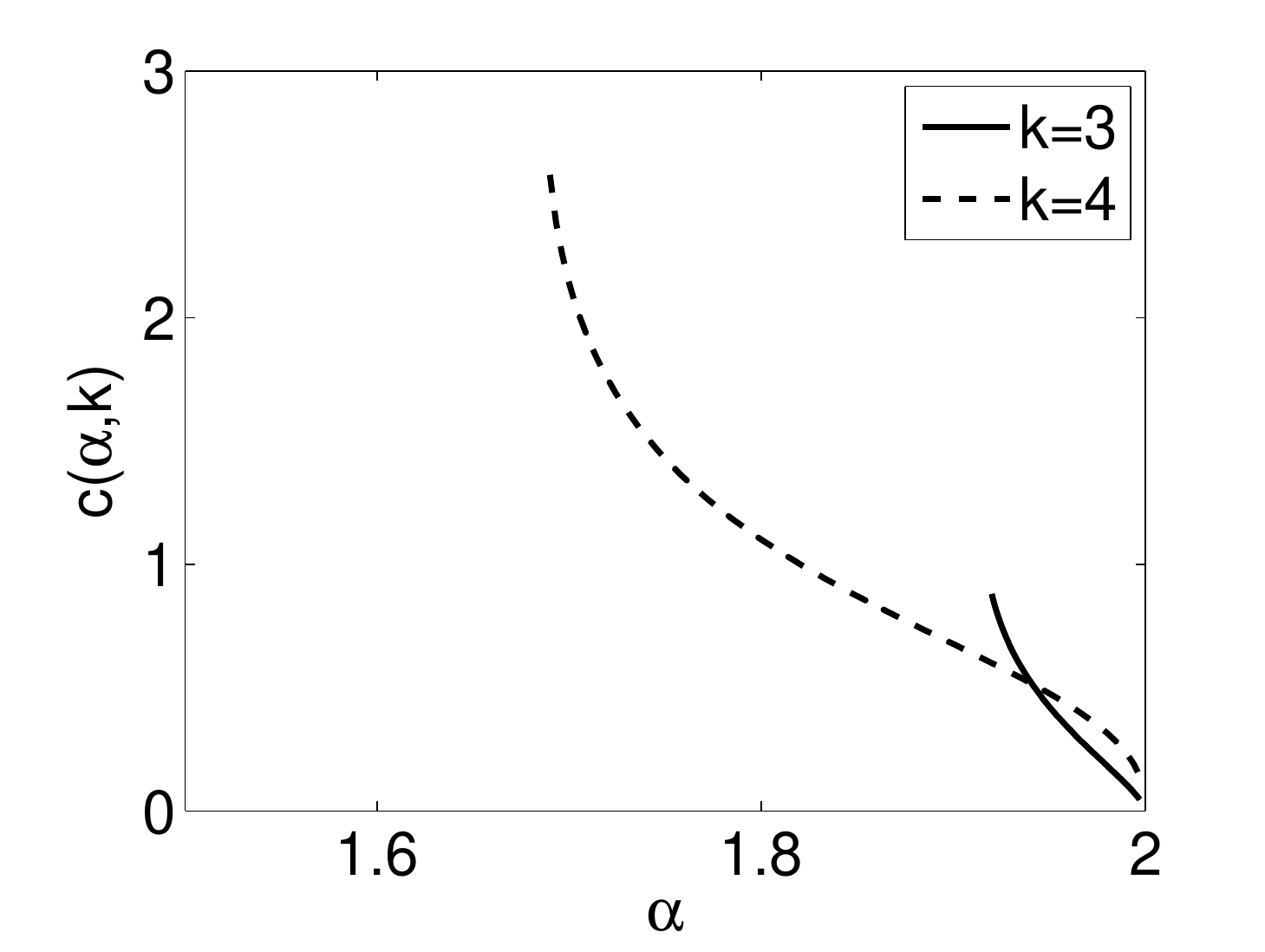} \includegraphics[width=0.48\textwidth]{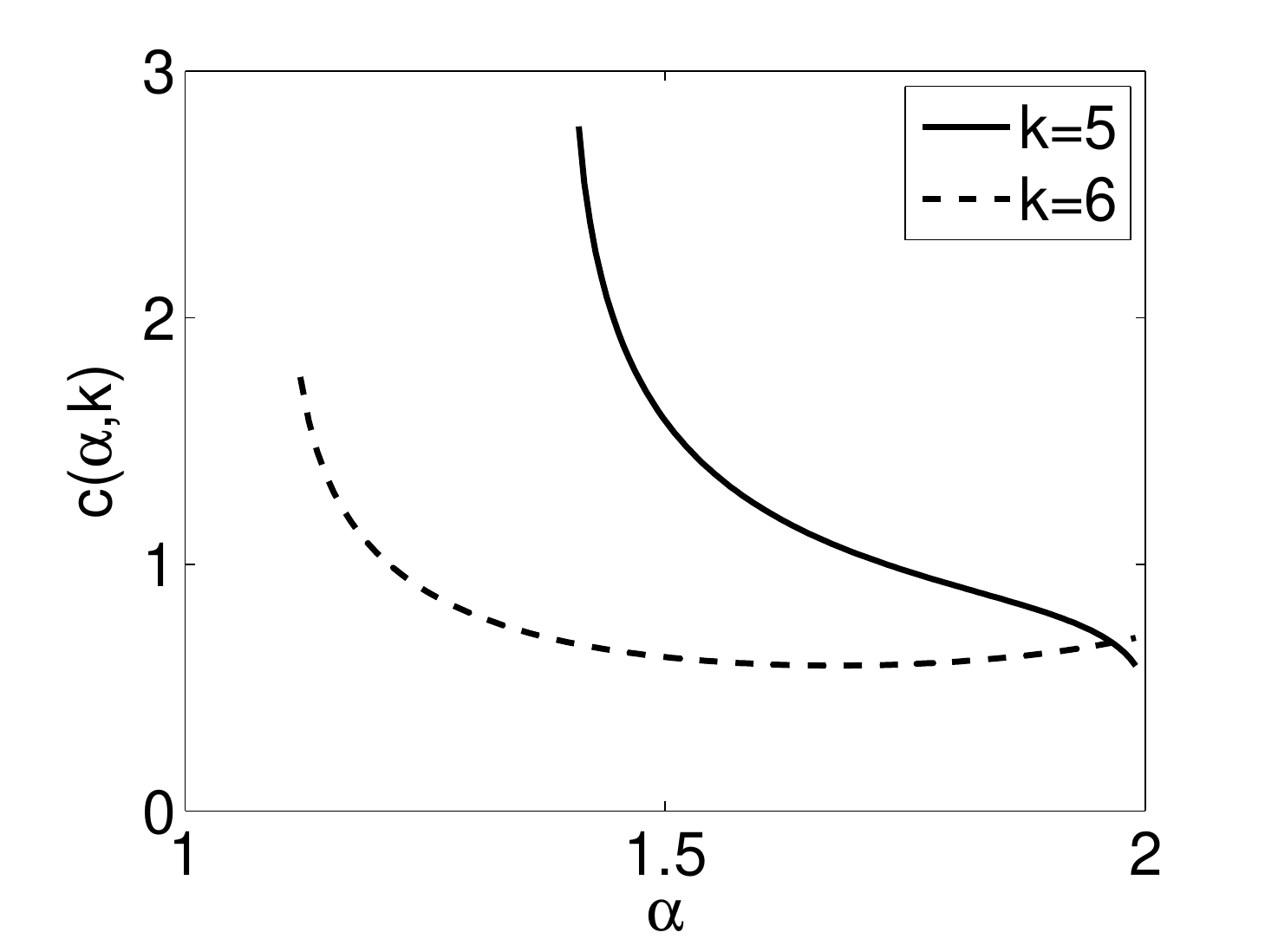}
  \caption{The CFL constant $c(\alpha,k)$ for BDF$k$, $k=3,4,5,6$, at different $\alpha$ values.}\label{fig:cfl}
\end{figure}

The next result gives the representation of the solution $W^n=U^n-v-t_nb$,
which follows from simple yet lengthy computations, cf. Appendix \ref{app:sol-rep-dw}.
\begin{theorem}\label{thm:solurep-dw}
Under Condition \ref{assump:CFL},
the discrete solution $W^n:=U^n-v-t_nb$ is given by
\begin{align}\label{eqn:sol-rep-dw-dis}
W^n=& \frac{1}{2\pi\mathrm{i} }\int_{\Gamma^\tau_{\theta,\delta}}e^{zt_{n}} \mu(e^{-z\tau}) K( \delta_\tau(e^{-z\tau}))A v\,dz \nonumber  \\
&+\frac{1}{2\pi\mathrm{i}}\int_{\Gamma^\tau_{\theta,\delta}}e^{zt_n}K(\delta_\tau(e^{-z\tau}))\delta_\tau(e^{-z\tau}) \bigg(\gamma_1(e^{-z\tau})+\sum_{j=1}^{k-1}c_j^{(k)}e^{-zt_j}\bigg)\tau^2 Ab\,dz \nonumber \\
  &+\frac{1}{2\pi\mathrm{i} }\int_{\Gamma^\tau_{\theta,\delta}}e^{zt_{n}}
  \delta_\tau(e^{-z\tau})K(\delta_\tau(e^{-z\tau}))  \sum_{\ell=1}^{k-2}\bigg(\delta(e^{-z\tau})\frac{\gamma_\ell(e^{-z\tau})}{\ell !}+\sum_{j=1}^{k-1} b_{\ell,j}^{(k)}e^{-zt_j}\bigg)\tau^{\ell}\partial_t^{\ell-1}f(0)\,dz\nonumber \\[-5pt]
&
+\frac{1}{2\pi\mathrm{i} }\int_{\Gamma^\tau_{\theta,\delta}}e^{zt_{n}}
  \delta_\tau(e^{-z\tau})^2K(\delta_\tau(e^{-z\tau}))\tau \widetilde  R_{k}(e^{-z\tau})
\,dz ,
\end{align}
with the contour
$\Gamma_{\theta,\delta}^\tau :=\{ z\in \Gamma_{\theta,\delta}:|\Im(z)|\le {\pi}/{\tau} \}$ {\rm(}oriented with an increasing imaginary part{\rm)},
for some $\theta$ sufficiently close to $\pi/2$, where $\mu(\zeta ) $ and
$\gamma_\ell(\zeta )$ are defined in \eqref{eqn:gamma-mu}.
\end{theorem}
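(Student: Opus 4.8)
\section*{Proof proposal}

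The plan is to mirror the derivation of the subdiffusion representation in Theorem~\ref{thm:solurep} (Appendix~\ref{app:sol-rep}), replacing the scalar-source bookkeeping by the slightly richer set of terms coming from the extra initial datum $b$ and the rewriting $f=\partial_t g$. First I would substitute $U^n=W^n+v+t_nb$ into the corrected scheme \eqref{eqn:BDF-CQ-dw}, use $W^0=0$, and collect everything into a single discrete equation for $(W^n)_{n\ge1}$ of the form
\[
\bar\partial_\tau^\alpha W^n - A W^n = Av + t_n Ab + \bar\partial_\tau g^n + (\text{corrections})_n,
\]
where the correction terms $a_n^{(k)}Av$, $c_n^{(k)}\tau Ab$ and $\sum_\ell b_{\ell,n}^{(k)}\tau^{\ell-1}\partial_t^{\ell-1}f(0)$ are supported only on $1\le n\le k-1$, and $g$ is split according to \eqref{eqn:decomp-rhs-dw} into its Taylor polynomial plus the remainder $R_k$.

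Next I would pass to generating functions. The two structural identities are $\widetilde{\bar\partial_\tau^\alpha W}(\zeta)=\delta_\tau(\zeta)^\alpha\widetilde W(\zeta)$ and $\widetilde{\bar\partial_\tau g}(\zeta)=\delta_\tau(\zeta)\widetilde g(\zeta)$, both valid because $W^0=g(0)=0$. The constant sequence $Av$ contributes $\frac{\zeta}{1-\zeta}$, the linear sequence $t_nAb=n\tau Ab$ contributes $\tau\gamma_1(\zeta)Ab$, and the monomials $\frac{t_n^\ell}{\ell!}$ in $\widetilde g$ contribute $\frac{\tau^\ell}{\ell!}\gamma_\ell(\zeta)$, by the very definition \eqref{eqn:gamma-mu} of $\gamma_\ell$; the finitely supported corrections contribute the polynomials $\sum_j a_j^{(k)}\zeta^j$, $\sum_j c_j^{(k)}\zeta^j$ and $\sum_j b_{\ell,j}^{(k)}\zeta^j$. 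Solving the resulting algebraic relation gives $\widetilde W(\zeta)=(\delta_\tau(\zeta)^\alpha-A)^{-1}[\,\cdots\,]$; I would then rewrite $(\delta_\tau(\zeta)^\alpha-A)^{-1}=\delta_\tau(\zeta)K(\delta_\tau(\zeta))$ from \eqref{eqn:kernel}, and use $\delta_\tau(\zeta)=\delta(\zeta)/\tau$ to reorganize the powers of $\tau$. The $Av$ group collapses to $\mu(\zeta)K(\delta_\tau(\zeta))$ exactly as in the subdiffusion case, since $\mu(\zeta)=\delta(\zeta)\big(\frac{\zeta}{1-\zeta}+\sum_j a_j^{(k)}\zeta^j\big)$; the $Ab$ group produces the factor $\delta_\tau K(\delta_\tau)\big(\gamma_1+\sum_j c_j^{(k)}\zeta^j\big)$; the $\partial_t^{\ell-1}f(0)$ group produces $\delta_\tau K(\delta_\tau)\big(\delta\,\gamma_\ell/\ell!+\sum_j b_{\ell,j}^{(k)}\zeta^j\big)$; and the $R_k$ group, carrying the extra factor $\delta_\tau$ from $\bar\partial_\tau g$, produces $\delta_\tau^2 K(\delta_\tau)\widetilde R_k$.

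Finally I would invert the generating function by Cauchy's formula $W^n=\frac{1}{2\pi\mathrm{i}}\oint_{|\zeta|=\rho}\zeta^{-n-1}\widetilde W(\zeta)\,d\zeta$ and substitute $\zeta=e^{-z\tau}$, which sends $\zeta^j\mapsto e^{-zt_j}$ and turns $\zeta^{-n-1}\,d\zeta$ into $-\tau\,e^{zt_n}\,dz$; the single factor of $\tau$ so produced combines in each group with the powers of $\tau$ already present (and with $\delta_\tau=\delta/\tau$) to yield precisely the stated prefactors. By the $\tau$-periodicity of $e^{-z\tau}$ the circle maps onto the vertical segment $\{\Re z=\text{const},\ |\Im z|\le\pi/\tau\}$ (with a reversal of orientation that absorbs the sign), and deforming this segment to $\Gamma_{\theta,\delta}^\tau$ produces the four claimed integrals term by term.

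The main obstacle is justifying this contour deformation, i.e.\ verifying that the integrand extends analytically to the region swept between the vertical segment and $\Gamma_{\theta,\delta}^\tau$. This amounts to guaranteeing that $\delta_\tau(e^{-z\tau})^\alpha$ stays in the sector $\Sigma_{\theta'}$ where the resolvent estimate \eqref{eqn:resol} applies and that $\delta_\tau(e^{-z\tau})^\alpha-A$ remains invertible, which for $\alpha\in(1,2)$ is exactly the content of Condition~\ref{assump:CFL}: for $\alpha<\alpha^*(k)$ the $A(\vartheta_k)$-stability of BDF$k$ keeps $\delta(\zeta)^\alpha$ inside the sector unconditionally, whereas for $\alpha\ge\alpha^*(k)$ the CFL restriction $\tau^\alpha r(A)\le c(\alpha,k)-\gamma$ is needed to keep $\delta_\tau(e^{-z\tau})^\alpha$ off the spectrum of $A$, where the curve $\{\delta(\zeta)^\alpha:|\zeta|=1\}$ would otherwise cross the negative real axis. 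Apart from this stability input, the computation is the same bookkeeping as in the subdiffusion proof, so I would relegate the routine verification of the generating-function identities to an appendix, as the authors do.
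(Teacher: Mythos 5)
Your proposal follows the paper's own proof essentially step for step: the same generating-function computation (convolution rules for $\bar\partial_\tau^\alpha W^n$ and $\bar\partial_\tau g^n$ with $W^0=g(0)=0$, the identities \eqref{eqn:basic-sum} producing $\gamma_\ell$, and the factor bookkeeping via $\delta_\tau(\zeta)=\delta(\zeta)/\tau$ that yields $\mu$, the $c_j^{(k)}$ and $b_{\ell,j}^{(k)}$ polynomials, and the $\delta_\tau^2 K(\delta_\tau)\widetilde R_k$ term), followed by Cauchy's formula with $\zeta=e^{-z\tau}$ and the contour deformation to $\Gamma_{\theta,\delta}^\tau$ justified exactly as in Appendix D — via Lemma \ref{lem:delta} and sectoriality under Condition \ref{assump:CFL}(i), and via the distance of $\delta(e^{-z\tau})^\alpha$ to $\tau^\alpha S(A)$ under Condition \ref{assump:CFL}(ii). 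The plan is correct and identifies the same key analyticity obstacle the authors address, so no substantive deviation or gap to report.
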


Proceeding like before, from the solution representations \eqref{eqn:sol-rep-dw-cont} and \eqref{eqn:sol-rep-dw-dis},
we deduce the following algebraic criteria for choosing the coefficients $a_j^{(k)}$, $c_j^{(k)}$ and $b_{\ell,n}^{(k)}$:
\begin{align}
  |\mu(\zeta)-1| & \leq c|1-\zeta|^k,\label{crit:mu-dw}\\
  \bigg|\gamma_1(\zeta)+\sum_{j=1}^{k-1}c_j^{(k)}\zeta^j-\frac{1}{\delta(\zeta)^2}\bigg|&\leq c|1-\zeta|^{k-2}, \label{crit:c-dw}\\
  \bigg|\delta(\zeta)\frac{\gamma_\ell(\zeta)}{\ell !}+\sum_{j=1}^{k-1} b_{\ell,j}^{(k)}\zeta^j-\frac{1}{\delta(\zeta)^\ell}\bigg|&\leq c|1-\zeta|^{k-\ell},\quad \ell=1,2,\ldots,k-2, \label{crit:b-dw}
\end{align}
where the functions $\mu(\zeta)$ and $\gamma_\ell(\zeta)$ are defined in \eqref{eqn:gamma-mu}.

By comparing Criterion \eqref{crit:mu-dw} with \eqref{crit:mu}, and respectively Criterion \eqref{crit:c-dw}
with \eqref{crit:b}, the coefficients $a_j^{(k)}$ are identical with that for $\alpha\in(0,1)$, and respectively $c_j^{(k)}$
with $b_{1,j}^{(k)}$ for $\alpha\in(0,1)$. However, due to the presence of the extra factor $\delta(\zeta)$,
the coefficients $b_{\ell,j}^{(k)}$ are different from that of the case $0<\alpha<1$, and have to be determined.
The procedure for computing $b_{\ell,j}^{(k)}$
is similar to that in Section \ref{ssec:coef-sub}, and the results are given in Table \ref{tab:bln-dw}.

\begin{table}[htb!]
\caption{The coefficients $b_{\ell,j}^{(k)}$ according to Criterion \eqref{crit:b-dw}.}
\label{tab:bln-dw}
\centering
     \begin{tabular}{|c c|ccccc|}
\hline
      order of BDF &   & $b_{\ell,1}^{(k)}$  & $b_{\ell,2}^{(k)}$  & $b_{\ell,3}^{(k)}$ & $b_{\ell,4}^{(k)}$ & $b_{\ell,5}^{(k)}$     \\[2pt]
\hline
       $k=3$          &$\ell=1$   &$\frac{1}{12}$  &  $-\frac{1}{12}$   &  &    &  \\[2pt]
\hline
       $k=4$          &$\ell=1$   &$\frac{5}{24}$  & $-\frac{1}{3}$    & $\frac{1}{8}$   &  &  \\[3pt]
                           &$\ell=2$   & $0$                &  $0$  & $0$   & & \\[2pt]
\hline
       $k=5$          &$\ell=1$   &$\frac{257}{720}$  & $-\frac{187}{240}$   & $\frac{137}{240}$ & $-\frac{107}{240}$ &  \\[3pt]
                           &$\ell=2$   & $\frac{1}{240}$                 & $-\frac{1}{120}$  & $\frac{1}{240}$    & $0$ &  \\[3pt]
                           &$\ell=3$   &$-\frac{1}{720}$ & $\frac{1}{720}$  & $0$ & $0$ & \\[2pt]
\hline
       $k=6$          &$\ell=1$   &$\frac{749}{1440}$& $-\frac{1031}{720}$  &$\frac{31}{20}$ & $-\frac{577}{720}$  & $\frac{47}{288}$\\[3pt]
                           &$\ell=2$   & $\frac{1}{80}$             & $-\frac{1}{30}$  & $\frac{7}{240}$  & $-\frac{1}{120}$ & 0 \\[3pt]
                           &$\ell=3$   & $-\frac{1}{288}$ &  $\frac{1}{180}$ & $-\frac{1}{480}$ & $0$  & 0\\[3pt]
                           &$\ell=4$   & $0$ & $0$ & $0$ & $0$  & 0\\[2pt]
\hline
     \end{tabular}
\end{table}

Last, we state the error estimate for the approximation $U^n$. The
proof is similar to that of Theorem \ref{thm:conv-dw}, but with $g=\partial_t^{-1}f$
in place of $f$. It is briefly sketched in Appendix \ref{app:diff-wave}.
\begin{theorem}\label{thm:conv-dw}
Let Criteria \eqref{crit:mu-dw}, \eqref{crit:c-dw} and \eqref{crit:b-dw} hold, and Condition \ref{assump:CFL} be
fulfilled. Then for the solution $U^n$ to \eqref{eqn:BDF-CQ-dw}, the following error estimate holds {for any $t_n>0$}
\begin{equation*}
\begin{aligned}
  \|U^n-u(t_n)\|_{L^2(\Omega)}  \leq  & c\tau^k \bigg(t_n^{ \alpha -k }  \|f(0)+Av\|_{L^2(\Omega)} + t_n^{ \alpha+1 -k }\| A b \|_{L^2(\Omega)} \\
 &\quad + \sum_{\ell=1}^{k-2} t_n^{ \alpha+\ell -k }\|\partial_t^{\ell}f(0)\|_{L^2(\Omega)}+\int_0^{t_n}(t_n-s)^{\alpha-2}\|\partial_s^{k-1}f(s)\|_{L^2(\Omega)}ds\bigg).
\end{aligned}
\end{equation*}
\end{theorem}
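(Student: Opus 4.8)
The plan is to mimic the subdiffusion proof of Theorem~\ref{thm:conv}, carrying the auxiliary function $g=\partial_t^{-1}f$ through every step. I would start from the exact error splitting $u(t_n)-U^n=w(t_n)-W^n$, with $w=u-v-t_nb$, and subtract the continuous representation \eqref{eqn:sol-rep-dw-cont} from the discrete one \eqref{eqn:sol-rep-dw-dis}, matching the integrands that act on the four data blocks $Av+f(0)$, $Ab$, $\{\partial_t^{\ell-1}f(0)\}$ and $R_k$. Because the two representations live on different contours, the first technical step is to truncate \eqref{eqn:sol-rep-dw-cont} to $\Gamma^\tau_{\theta,\delta}$: the discarded tail $|\Im z|>\pi/\tau$ is controlled by the kernel decay $\|K(z)\|\le c|z|^{-1-\alpha}$ from \eqref{eqn:resol} and is of order $O(\tau^k)$, hence absorbed into the stated bound. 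On the common contour the error then reduces to a sum of integrals of the form $\frac{1}{2\pi\mathrm{i}}\int_{\Gamma^\tau_{\theta,\delta}}e^{zt_n}[\,\text{(continuous kernel)}-\text{(discrete kernel)}\,]\,(\text{data})\,dz$.

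The heart of the argument is to turn the scalar algebraic criteria \eqref{crit:mu-dw}--\eqref{crit:b-dw} into operator kernel bounds. The two workhorses are Lemma~\ref{lem:delta}, in the form $|\delta_\tau(e^{-z\tau})-z|\le c|z|^{k+1}\tau^k$ together with $|1-e^{-z\tau}|\le c|z|\tau$ (valid since $|z\tau|$ is bounded on $\Gamma^\tau_{\theta,\delta}$), and the resolvent smoothness $\|K(\delta_\tau(e^{-z\tau}))\|\le c|z|^{-1-\alpha}$, $\|K'(z)\|\le c|z|^{-2-\alpha}$. Using \eqref{crit:mu-dw} and the $\ell=1$ instance of \eqref{crit:b-dw}, both the $Av$ kernel $\mu(e^{-z\tau})K(\delta_\tau)$ and the $f(0)$ kernel $\delta(e^{-z\tau})K(\delta_\tau)(\delta\gamma_1+\sum_j b_{1,j}^{(k)}e^{-zt_j})$ equal $K(\delta_\tau)(1+O(|z\tau|^k))$, and since $K(\delta_\tau)-K(z)=O(\tau^k|z|^{k-1-\alpha})$ their leading parts coincide; a regrouping then collects both into a single block acting on $Av+f(0)$ with kernel difference $O(\tau^k|z|^{k-1-\alpha})$. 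Likewise \eqref{crit:c-dw} yields $O(\tau^k|z|^{k-2-\alpha})$ for the $Ab$ block and \eqref{crit:b-dw} yields $O(\tau^k|z|^{k-\ell-\alpha})$ for the $\partial_t^{\ell-1}f(0)$ block. Feeding these into the contour integral and invoking the elementary estimate $\int_{\Gamma^\tau_{\theta,\delta}}e^{\Re(z)t_n}|z|^{p}\,|dz|\le c\,t_n^{-(p+1)}$ for $p>-1$ (with the standard choice $\delta\sim t_n^{-1}$) produces exactly the powers $t_n^{\alpha-k}$, $t_n^{\alpha+1-k}$ and $t_n^{\alpha+\ell-k}$ in the claim.

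The remainder term is handled through the decomposition $R_k=\frac{t^{k-1}}{(k-1)!}\partial_t^{k-2}f(0)+\frac{t^{k-1}}{(k-1)!}*\partial_t^{k-1}f$ coming from \eqref{eqn:decomp-rhs-dw} with $g=\partial_t^{-1}f$. The polynomial part contributes the endpoint $\ell=k-2$ term $t_n^{\alpha-2}\|\partial_t^{k-2}f(0)\|$ of the sum, while the convolution part, after identifying $\tau\widetilde R_k(e^{-z\tau})$ as the order-$k$ quadrature approximation of $\widehat R_k(z)$ and using that the relevant kernel $z^2K(z)=z(z^\alpha-A)^{-1}$ has norm $|z|^{1-\alpha}$ (time weight $(t_n-s)^{\alpha-2}$), yields $c\tau^k\int_0^{t_n}(t_n-s)^{\alpha-2}\|\partial_s^{k-1}f(s)\|\,ds$; the single net derivative gained over the subdiffusion bound is precisely the effect of the extra $\partial_t^{-1}$.

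The principal obstacle is the validity of the kernel estimates themselves. Every bound above rests on $K(\delta_\tau(e^{-z\tau}))$ obeying $\|K(\delta_\tau(e^{-z\tau}))\|\le c|z|^{-1-\alpha}$ uniformly on $\Gamma^\tau_{\theta,\delta}$, which requires $\delta_\tau(e^{-z\tau})^\alpha$ to stay inside the sector of the resolvent estimate \eqref{eqn:resolvent-estimate}. Unlike the subdiffusion case this is not automatic, since $\alpha\theta$ may exceed $\pi$; it is exactly here that Condition~\ref{assump:CFL}, and the choice of $\theta$ close to $\pi/2$ already used in Theorem~\ref{thm:solurep-dw}, must be invoked (in case (ii) the CFL restriction $\tau^\alpha r(A)\le c(\alpha,k)-\gamma$ keeps $\delta_\tau(e^{-z\tau})^\alpha$ off the spectrum of $A$). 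A secondary, purely bookkeeping, difficulty is tracking powers of $\delta(e^{-z\tau})\sim z\tau$ carefully enough to merge the $Av$ and $f(0)$ contributions into the single norm $\|f(0)+Av\|$ and to align the polynomial part of $R_k$ with the $\ell=k-2$ term of the sum; once the stability-driven kernel bounds are in place, the rest is routine power counting identical in spirit to the proof of Theorem~\ref{thm:conv}.
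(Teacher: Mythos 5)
Your overall architecture is the same as the paper's: the paper proves Theorem \ref{thm:conv-dw} by repeating the argument of Theorem \ref{thm:conv} with the representations \eqref{eqn:sol-rep-dw-cont} and \eqref{eqn:sol-rep-dw-dis}, and your power counting is accurate throughout --- the merging of the $Av$ and $f(0)$ blocks into $\|f(0)+Av\|_{L^2(\Omega)}$ (the $\ell=1$ instance of \eqref{crit:b-dw} makes the discrete $f(0)$ kernel agree with $K(\delta_\tau(e^{-z\tau}))$ up to $O(\tau^k|z|^{k-1-\alpha})$), the $O(\tau^k|z|^{k-2-\alpha})$ bound for the $Ab$ block via \eqref{crit:c-dw}, and the splitting of $R_k$ with $g=\partial_t^{-1}f$, whose polynomial part supplies the $\ell=k-2$ endpoint $t_n^{\alpha-2}\|\partial_t^{k-2}f(0)\|_{L^2(\Omega)}$, all match what the paper's proof yields.

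The genuine gap is in your treatment of Condition \ref{assump:CFL}(ii). You write that the CFL restriction ``keeps $\delta_\tau(e^{-z\tau})^\alpha$ off the spectrum of $A$,'' but spectral avoidance alone does not give the uniform bound $\|(\delta_\tau(e^{-z\tau})^\alpha-A)^{-1}\|\le c|z|^{-\alpha}$ on $\Gamma^\tau_{\theta,\delta}$ on which every one of your kernel estimates rests: for $\alpha\ge\alpha^*(k)$ the symbol $\delta_\tau(e^{-z\tau})^\alpha$ leaves the sector where \eqref{eqn:resolvent-estimate} applies, so sectorial resolvent bounds are unavailable and one must prove the quantitative estimate \eqref{resolvent-CFL} directly. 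This is in fact the bulk of the paper's Appendix \ref{app:diff-wave}. The argument there uses self-adjointness of $A$ (so that, by \cite[Theorem 3.9, Chapter 1]{Pazy:1983}, the resolvent norm is controlled by the reciprocal distance to the spectrum) and then establishes the lower bound ${\rm dist}(\delta(e^{-z\tau})^\alpha,\tau^\alpha S(A))\ge c|z\tau|^\alpha$ in several distinct regimes: near the origin, where the curve $\delta(e^{-{\rm i}\xi\tau})^\alpha$ meets the segment $[-\tau^\alpha r(A),0]$ at the positive angle $(1-\alpha/2)\pi$ (respectively $\pi-\alpha\theta$ on the tilted ray); for $\kappa\le|\xi|\tau\le\pi$, where it is precisely the margin $\gamma>0$ in $\tau^\alpha r(A)\le c(\alpha,k)-\gamma$ that yields ${\rm dist}\ge c\ge c|\xi\tau|^\alpha$ (with $\gamma=0$ the argument fails, consistent with the sharpness of the threshold seen in Fig. \ref{fig:R}); a perturbation estimate $|\delta(e^{-z\tau})^\alpha-\delta(e^{-{\rm i}|z|\tau\sin(\theta)})^\alpha|\le c|\theta-\pi/2|\,|z\tau|^{\alpha-1}$ to transfer these bounds from the unit-circle parametrization to the tilted contour $\Gamma_\theta^\tau$ for $\theta$ close to $\pi/2$; and a separate treatment of the circular arc $|z|=\delta$ using the smallest eigenvalue $\lambda_{\min}$ of $A$. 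None of these steps is routine power counting, and without them your proof is complete only in the unconditionally stable case (i), where Lemma \ref{lem:delta} keeps $\delta_\tau(e^{-z\tau})^\alpha$ inside a sector $\Sigma_{\pi-\varepsilon}$ and the resolvent bound is immediate.
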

\begin{remark}
Theorem \ref{thm:conv-dw} only requires $(k-1)^{\rm th}$ order derivative of $f$ in time, instead of $k^{\rm th}$ order derivative of $f$ as in Theorem \ref{thm:conv}. Thus it indeed relaxes the regularity
condition.
\end{remark}

\section{Numerical experiments and discussions}\label{sec:numerics}
Now we present numerical results to show the efficiency and accuracy of the schemes
\eqref{eqn:BDF-CQ} and \eqref{eqn:BDF-CQ-dw} in one-spatial dimension, on the unit interval $\Omega=(0,1)$. In space, it is
discretized with the piecewise linear Galerkin finite element method \cite{JinLazarovZhou:SIAM2013}: we
divide $\Omega$ into $M$ equally spaced subintervals with a mesh size
$h=1/M$. Since the convergence behavior of the spatial discretization is well understood,
we focus on the temporal convergence. In the computation, we fix the time step size
$\tau$ at $\tau=t/N$, where $t$ is the time of interest. We measure the accuracy
by the normalized errors $e^N=\|u(t_N)-U^N \|_{L^2(\Omega)}/ \| u(t_N) \|_{L^2(\Omega)}$, where the reference solution
$u(t_N)$ is computed using a much finer mesh. All the computations are carried out in MATLAB {R2015a}
on a personal laptop, and further, in order to observe error beyond
double precision, we employ the {Multiprecision Computing Toolbox\footnote{\url{http://www.advanpix.com/}, last accessed on January 11, 2017.} for MATLAB}.

\subsection{Numerical results for subdiffusion}

In the subdiffusion case, we consider the following two examples:
\begin{itemize}
  \item[(a)]  $v=x(1-x) \in H^2(\Omega)\cap H_0^1(\Omega)$ and $f\equiv 0$;
  \item[(b)] $v\equiv0$ and $f(x,t)=\cos(t)(1+\chi_{(0,1/2)}(x))$.
\end{itemize}

The numerical results for case (a) by the corrected scheme \eqref{eqn:BDF-CQ} are presented in Table
\ref{tab:v-correct-smooth}, where the numbers in the bracket denote the theoretical rate predicted
by Theorem \ref{thm:conv}. It converges steadily at an $O(\tau^k)$ rate for all
BDFs, which agrees well with the theory, showing clearly its robustness.
Surprisingly, the asymptotic convergence of BDF6 kicks in only at a relatively small time step size,
at $N=50$, which contrasts sharply with other BDF schemes. Thus in the preasymptotic regime, BDF5
is preferred over BDF6. To further illustrate Theorem
\ref{thm:conv}, in Fig. \ref{fig:error}, we plot the numerical solution by BDF5 and its error
profile. The solution decays first rapidly and then slowly, resulting in an
initial layer. This layer shows clearly the limited temporal regularity of the solution at $0$ and as a
result, the approximation error near $0$ is predominant, partly confirming the prefactor
$t_n^{\alpha-k}$ in Theorem \ref{thm:conv}.

{\small
\begin{table}[htb!]
\caption{The $L^2$-norm error $e^N$ for case (a) at $t_N=1$, by the corrected scheme
\eqref{eqn:BDF-CQ} with $h=1/100$.}
\label{tab:v-correct-smooth}
\centering
     \begin{tabular}{|c|c|lllll|c|}
     \hline
      $\alpha$ &  $k\backslash N$  &$50$   &$100$ &$200$ &$400$ & $800$   &rate \\
     \hline
             & 2  &5.66e-5 &1.39e-5 &3.46e-6 &8.64e-7 &2.16e-7 &$\approx$ 2.00 (2.00)\\
             & 3  &2.29e-6 &2.76e-7 &3.39e-8 &4.20e-9 &5.23e-10 &$\approx$ 3.01 (3.00)\\
    $ 0.25$  & 4  &1.42e-7 &8.33e-9 &5.04e-10 &3.10e-11 &1.91e-12  &$\approx$ 4.02 (4.00)\\
             & 5  &1.26e-8 &3.41e-10 &1.01e-11 &3.07e-13 &9.45e-15 &$\approx$ 5.03 (5.00)\\
             & 6  &1.09e-5 &1.60e-9 &2.55e-13 &3.82e-15 &5.83e-17 &$\approx$ 6.04 (6.00)\\
      \hline
             & 2  &1.74e-4 &4.30e-5 &1.07e-5 &2.65e-6 &6.62e-7 &$\approx$ 2.00 (2.00)\\
             & 3  &7.73e-6 &9.29e-7 &1.14e-7 &1.41e-8 &1.76e-9 &$\approx$ 3.01 (3.00)\\
    $ 0.5$   & 4  &5.12e-7 &2.98e-8 &1.80e-9 &1.10e-10 &6.83e-12 &$\approx$ 4.02 (4.00)\\
             & 5  &4.75e-8 &1.27e-9 &3.76e-11 &1.14e-12 &3.52e-14 &$\approx$ 5.03 (5.00)\\
             & 6  &3.01e-5 &2.79e-9 &9.85e-13 &1.47e-14 &2.25e-16 &$\approx$ 6.05 (6.00)\\
      \hline
             & 2  &4.84e-4 &1.19e-4 &2.93e-5 &7.30e-6 &1.82e-6 &$\approx$ 2.00 (2.00)\\
             & 3  &2.55e-5 &3.04e-6 &3.72e-7 &4.60e-8 &5.71e-9 &$\approx$ 3.01 (3.00)\\
    $ 0.75$  & 4  &1.94e-6 &1.11e-7 &6.68e-9 &4.09e-10 &2.53e-11 &$\approx$ 4.02 (4.00)\\
             & 5  &2.95e-7 &5.30e-9 &1.55e-10 &4.70e-12 &1.45e-13 &$\approx$ 5.03 (5.00)\\
             & 6  &1.67e-3 &3.01e-7 &4.53e-12 &6.61e-14 &1.01e-15 &$\approx$ 6.07 (6.00)\\
      \hline
     \end{tabular}
\end{table}}

To illustrate the impact of initial correction, we present in Table \ref{tab:v-nocorrect} the numerical
results by the uncorrected BDF scheme \eqref{eqn:BDF-CQ-0}, and two popular finite difference schemes,
i.e., L1 scheme \cite{LinXu:2007} and L1-2 scheme \cite{GaoSunZhang:2014,LvXu:2016}. The uncorrected
BDF$k$ scheme can only achieve a first-order convergence, and all BDF schemes have almost identical accuracy,
irrespective of the order $k$. This low-order convergence is due to the poor approximation at
the initial steps, which persists in the numerical solutions at later steps. Meanwhile, for sufficiently
smooth solutions, the L1 and L1-2 schemes converge at a rate $O(\tau^{2-\alpha})$ and $O(\tau^{3-\alpha})$,
respectively. For general problem data, the L1 scheme converges at an $O(\tau)$ rate \cite{JinLazarovZhou:2016ima}.
The L1 and L1-2 schemes can only deliver an empirical $O(\tau)$ rate for case (a),
due to insufficient solution regularity for general problem data. Although not presented, it is noted
that the numerical results for other fractional orders are similarly. Therefore, the correction
is necessary in order to retain the desired rate, even for smooth initial data.

\begin{figure}[hbt!]
\vspace{-15pt}
\centering
\subfigure[numerical solution]{
\includegraphics[trim = .1cm .1cm .1cm .1cm, clip=true,width=6.5cm]{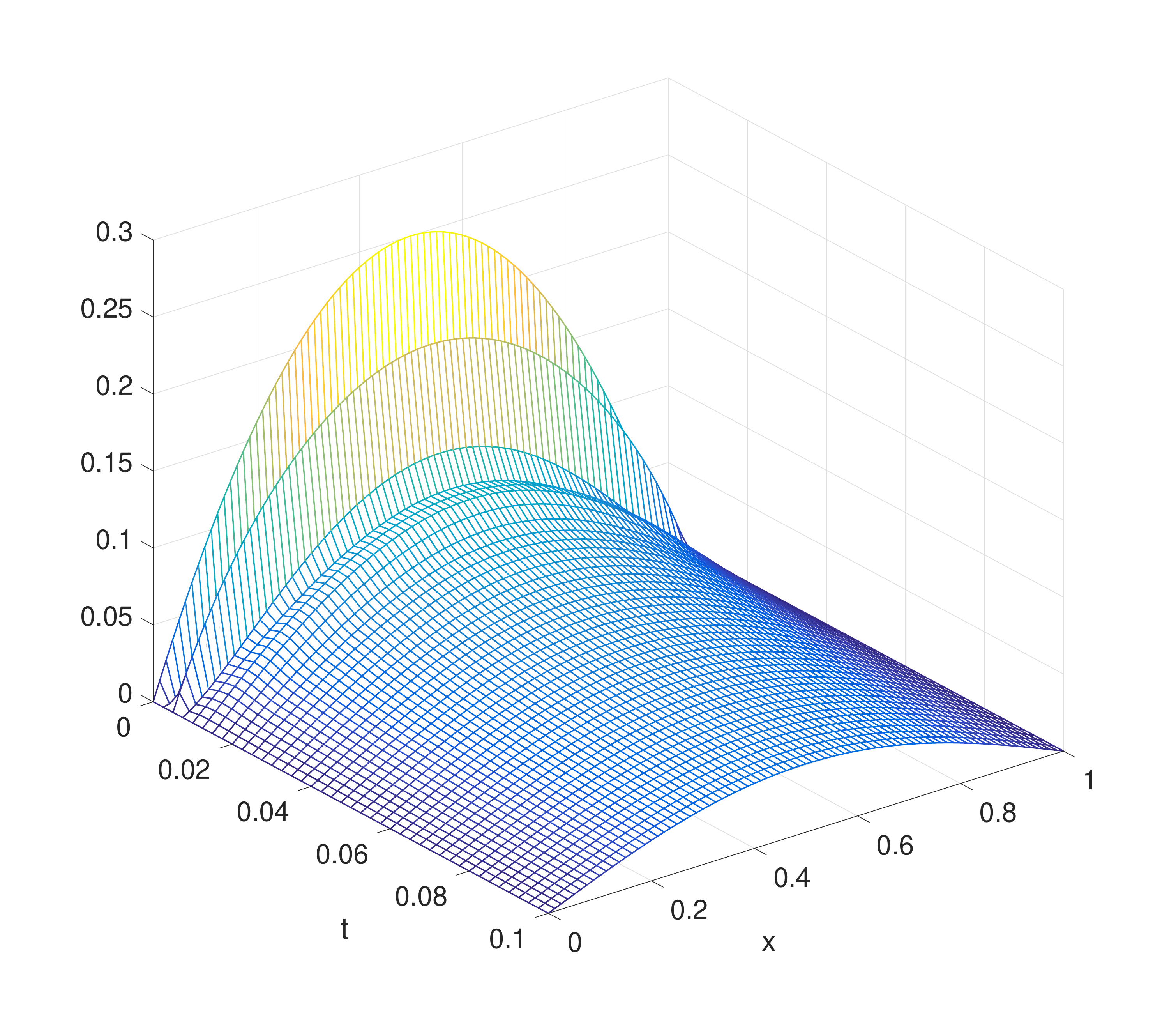}}
\subfigure[error profile]{
\includegraphics[trim = .1cm .1cm .1cm .1cm, clip=true,width=6.5cm]{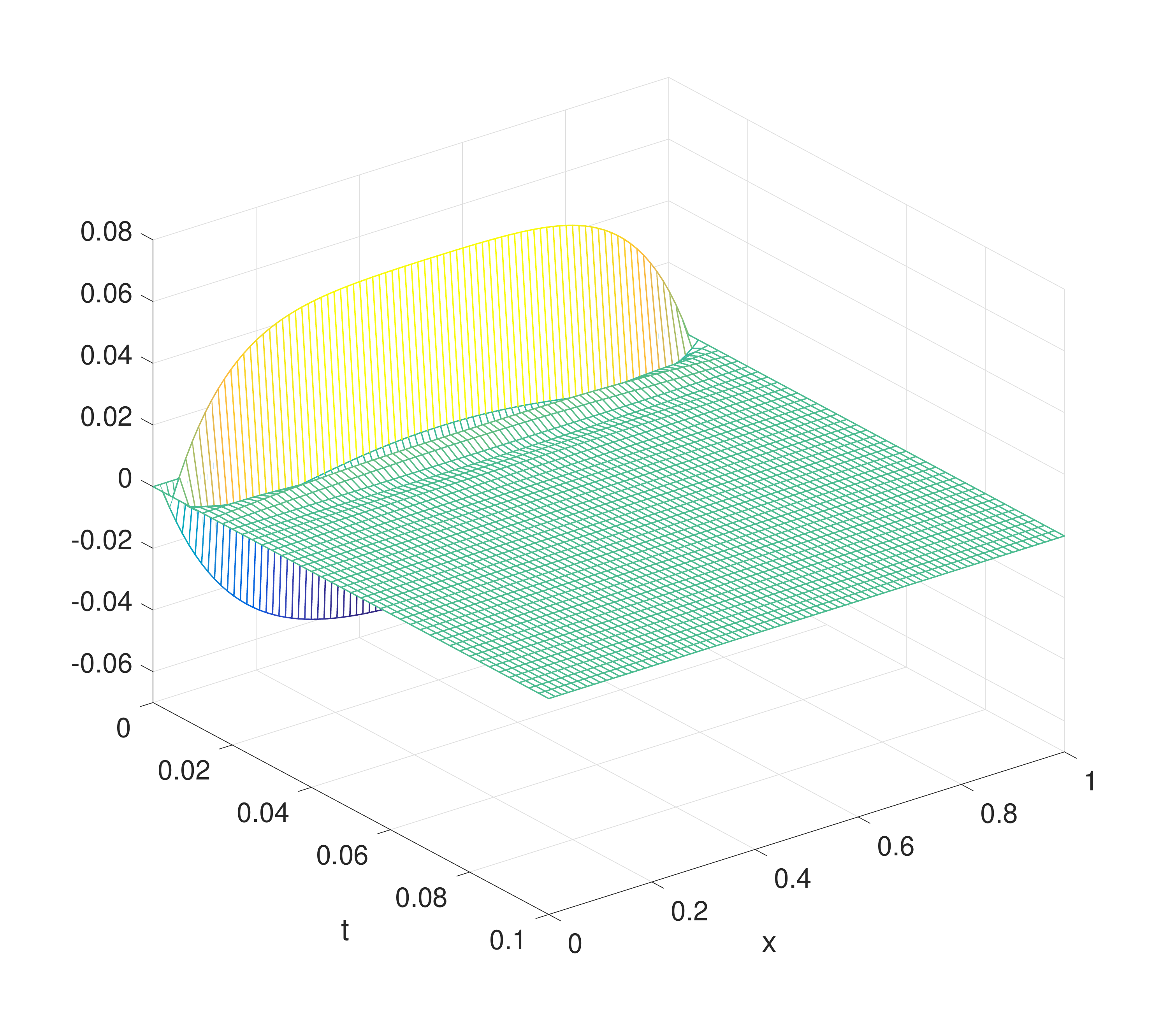}}
 \caption{Numerical solution and error profile for case (a), with $\alpha=0.5$, $h=1/100$, $\tau=0.002$ and BDF5.\label{fig:error} }
\end{figure}

{\small
\begin{table}[htb!]
\caption{The $L^2$-norm error $e^N$ for case (a)
at $t_N=1$, by the uncorrected scheme \eqref{eqn:BDF-CQ-0} with $h=1/100$.}
\label{tab:v-nocorrect}
\centering
     \begin{tabular}{|c|c|ccccc|c|}
     \hline
      $\alpha$ &  $N$  &$50$ &$100$ &$200$ & $400$ & $800$  &rate \\
      \hline
             & BDF3  &4.98e-3 &2.48e-3 &1.24e-3 &6.19e-4 &3.09e-4 &$\approx$ 1.00 ($1.00$)\\
             & BDF4  &4.97e-3 &2.48e-3 &1.24e-3 &6.19e-4 &3.09e-4 &$\approx$ 1.00 ($1.00$)\\
     $ 0.5$  & BDF5  &4.97e-3 &2.48e-3 &1.24e-3 &6.19e-4 &3.09e-4 &$\approx$ 1.00 ($1.00$)\\
             & BDF6  &4.94e-3 &2.48e-3 &1.24e-3 &6.19e-4 &3.09e-4 &$\approx$ 1.00 ($1.00$)\\
             & L1     &5.10e-3 &2.52e-3 &1.25e-3 &6.24e-4 &3.11e-4 &$\approx$ 1.04 ($1.00$)\\
             & L1-2   &3.57e-3 &1.73e-3 &8.40e-4 &4.08e-4 &1.99e-4 &$\approx$ 1.04 ($--$)\\
      \hline
     \end{tabular}
\end{table}}

Next we consider the inhomogeneous problem in case (b). Since the source term $f$ is smooth in time,
Theorem \ref{thm:conv} is applicable, which predicts an $O(\tau^k)$ rate for the corrected BDF$k$
scheme \eqref{eqn:BDF-CQ}. This is fully supported by the numerical results in Table \ref{tab:f-correct}.
Like before, the uncorrected scheme \eqref{eqn:BDF-CQ-0} and the L1 and L1-2 schemes can only achieve
an $O(\tau)$ rate, despite the smoothness of the problem data, cf. Table \ref{tab:f-nocorrect}.

{\small
\begin{table}[htb!]
\caption{The $L^2$-norm error $e^N$ for case (b) at $t_N=1$, by the corrected
scheme \eqref{eqn:BDF-CQ} with $h=1/100$.}
\label{tab:f-correct}
\centering
     \begin{tabular}{|c|c|lllll|c|}
     \hline
      $\alpha$ &  $k\backslash N$  &$50$ &$100$ &$200$ & $400$ & $800$  &rate \\
     \hline
             & 2  &6.67e-6 &1.65e-6 &4.10e-7 &1.02e-7 &2.55e-8 &$\approx$ 2.00 (2.00)\\
             & 3  &2.68e-7 &3.20e-8 &3.91e-9 &4.83e-10 &6.00e-11 &$\approx$ 3.01 (3.00)\\
    $ 0.25$  & 4  &2.14e-8 &1.25e-9 &7.57e-11 &4.65e-12 &2.88e-13 &$\approx$ 4.02 (4.00)\\
             & 5  &1.90e-9 &5.11e-11 &1.51e-12 &4.61e-14 &1.42e-15 &$\approx$ 5.03 (5.00)\\
             & 6  &1.63e-6 &2.40e-10 &3.79e-14 &5.68e-16 &8.67e-18 &$\approx$ 6.05 (6.00)\\
      \hline
             & 2  &1.76e-5 &4.35e-6 &1.08e-6 &2.70e-7 &6.62e-8 &$\approx$ 2.00 (2.00)\\
             & 3  &6.35e-7 &7.56e-8 &9.22e-9 &1.14e-9 &1.42e-10 &$\approx$ 3.01 (3.00)\\
    $ 0.5$   & 4  &5.23e-8 &3.03e-9 &1.83e-10 &1.12e-11 &6.95e-13 &$\approx$ 4.02 (4.00)\\
             & 5  &4.94e-9 &1.33e-10 &3.91e-12 &1.19e-13 &3.66e-15 &$\approx$ 5.03 (5.00)\\
             & 6  &3.14e-6 &2.91e-10 &1.02e-13 &1.52e-15 &2.32e-17 &$\approx$ 6.05 (6.00)\\
      \hline
             & 2  &3.03e-5 &7.47e-6 &1.86e-6 &4.63e-7 &1.16e-7 &$\approx$ 2.00 (2.00)\\
             & 3  &1.10e-6 &1.31e-7 &1.59e-8 &1.96e-9 &2.43e-10 &$\approx$ 3.01 (3.00)\\
    $ 0.75$  & 4  &9.98e-8 &5.72e-9 &3.43e-10 &2.10e-11 &1.30e-12 &$\approx$ 4.02 (4.00)\\
             & 5  &1.57e-8 &2.81e-10 &8.24e-12 &2.50e-13 &7.68e-15 &$\approx$ 5.03 (5.00)\\
             & 6  &8.95e-5 &1.61e-8 &2.40e-13 &3.50e-15 &5.33e-17 &$\approx$ 6.07 (6.00)\\
      \hline
     \end{tabular}
\end{table}}

{\small
\begin{table}[htb!]
\caption{The $L^2$-norm error $e^N$ for case (b)
at $t_N=1$, by the uncorrected scheme \eqref{eqn:BDF-CQ-0} with $h=1/100$.}
\label{tab:f-nocorrect}
\centering
     \begin{tabular}{|c|c|ccccc|c|}
     \hline
      $\alpha$ &  $  N$  &$50$ &$100$ &$200$ & $400$ & $800$  &rate \\
      \hline
             & BDF2  &5.14e-4 &2.57e-4 &1.29e-4 &6.45e-5 &3.22e-5 &$\approx$ 1.00 ($1.00$)\\
             & BDF3  &5.19e-4 &2.59e-4 &1.29e-4 &6.45e-5 &3.23e-5 &$\approx$ 1.00 ($1.00$)\\
             & BDF4  &5.18e-4 &2.59e-4 &1.29e-4 &6.45e-5 &3.23e-5 &$\approx$ 1.00 ($1.00$)\\
     $ 0.5$  & BDF5  &5.19e-4 &2.59e-4 &1.29e-4 &6.45e-5 &3.23e-5 &$\approx$ 1.00 ($1.00$)\\
             & BDF6  &5.15e-4 &2.59e-4 &1.29e-4 &6.45e-5 &3.23e-5 &$\approx$ 1.00 ($1.00$)\\
             & L1    &5.98e-4 &2.86e-4 &1.39e-4 &6.80e-5 &3.35e-5 &$\approx$ 1.02 ($1.00$)\\
             & L1-2  &3.71e-4 &1.80e-4 &8.76e-5 &4.25e-5 &2.07e-5 &$\approx$ 1.04 ($--$)\\
                   \hline
     \end{tabular}
\end{table}}

\subsection{Numerical results for diffusion-wave}
Now we illustrate the corrected scheme \eqref{eqn:BDF-CQ-dw} on the following 1D diffusion-wave example:
\begin{itemize}
\item[(c)] $v(x)=x(1-x)$, $b(x)=\sin(2\pi x)$ and $f=e^{t}(1+\chi_{(0,1/2)}(x))$
\end{itemize}

For the diffusion-wave model, the scheme \eqref{eqn:BDF-CQ-dw} is only conditionally stable for
$\alpha\ge \alpha^*(k)= \pi/(\pi-\vartheta_k)$, with a stability threshold $\tau_0= (c(\alpha,k)/r(A))^{1/\alpha}$,
according to Condition \ref{assump:CFL}. To illustrate the sharpness of the threshold $\tau_0$ or equivalently
the CFL constant $c(\alpha,k)$, we consider case (c) with $k=5$, $\alpha=1.5$, $h=1/M=1/100$.
The eigenvalues of the discrete Laplacian $A$ are available in closed form \cite{JinLazarovZhou:SIAM2013}:
\begin{equation*}
 \lambda^h_j= \bar{\lambda}_j^h /(1 - \tfrac{h^2}{6} \bar{\lambda}_j^h), ~~  \quad  \text{with }
  \bar{\lambda}_j^h=-\frac{4}{h^2}\sin^2\frac{\pi j}{2(N+1)},\quad j=1,2,\ldots, M-1.
\end{equation*}
Thus the numerical radius $r(A)=\max_j(\lambda^h_j) \approx 1.2\times 10^5$, which together with
the value $c(\alpha,k)=1.58$ from Fig. \ref{fig:cfl} gives a stability threshold $\tau_0\approx
5.60\times10^{-4}$. In Figs. \ref{fig:R} (a) and (b), we plot the numerical solutions
computed by the corrected scheme \eqref{eqn:BDF-CQ-dw} with $N = 1700$
(i.e., $\tau = 5.88\times10^{-4}$) and with $N = 1800$ (i.e., $\tau = 5.55\times10^{-4}$), respectively.
The scheme \eqref{eqn:BDF-CQ-dw} gives an unstable solution for $N=1700$ but a stable one for $N=1800$. This observation
fully confirms the sharpness of the CFL constant $c(\alpha,k)$ in Condition \ref{assump:CFL}.
In Table \ref{tab:diff-wave-cond}, we present the $L^2$ error for $\alpha>\alpha^*$ and small $\tau$ (such
that it satisfies the CFL condition). The numerical results indicate
the desired $O(\tau^k)$ rate, supporting the theory.

For $\alpha<\alpha^*(k)= \pi/(\pi-\vartheta_k)$, with $\alpha^*$ being the critical value, the corrected scheme
\eqref{eqn:BDF-CQ-dw} based on BDF$k$ is unconditionally stable. Numerically, the corrected scheme \eqref{eqn:BDF-CQ-dw} converges at an
$O(\tau^k)$ rate steadily, cf. Table \ref{tab:diff-wave}, which agrees well with Theorem \ref{thm:conv-dw}.

\begin{figure}[hbt!]
\vspace{-5pt}
\centering
\subfigure[$N=1700$]{
\includegraphics[trim = .1cm .1cm .1cm .1cm, clip=true,width=6.5cm]{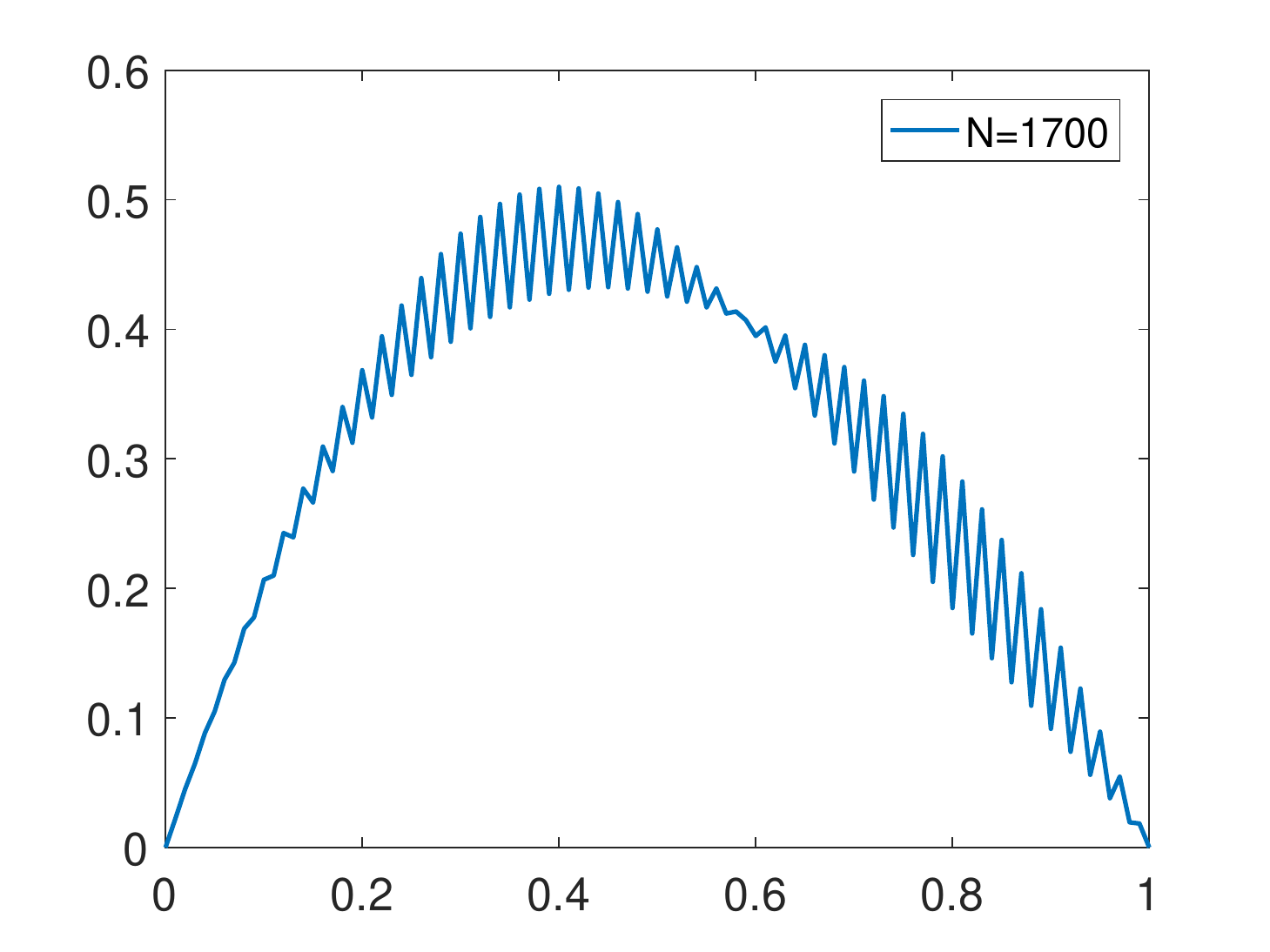}}
\subfigure[$N=1800$]{
\includegraphics[trim = .1cm .1cm .1cm .1cm, clip=true,width=6.5cm]{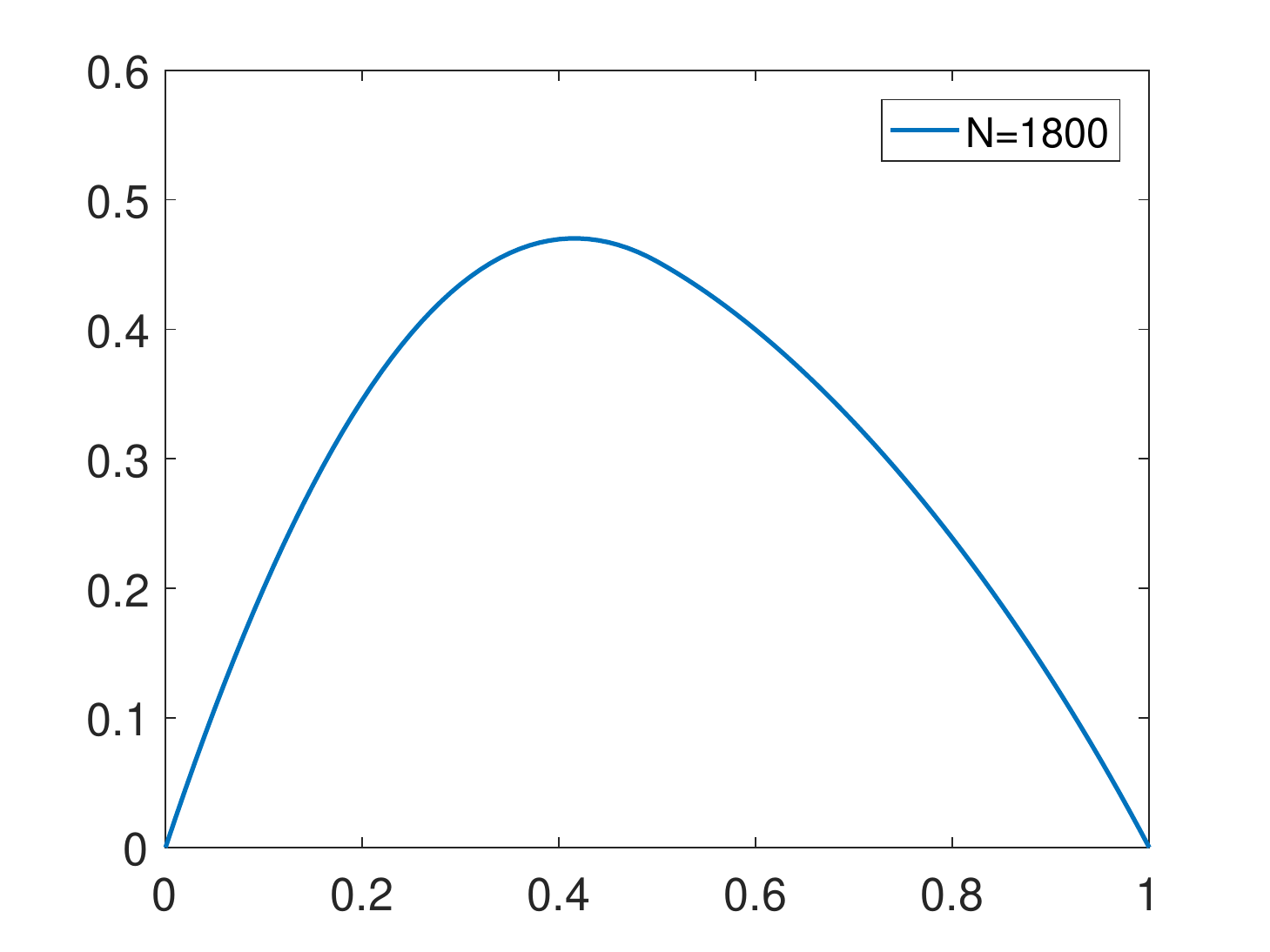}}
 \caption{The numerical solutions for case (c) at $t=1$, with $N=1700$
 ($\tau=5.88\times10^{-4}$) and $N=1800$ ($\tau=5.56\times10^{-4}$), $h=1/100$.
 The theoretical stability threshold is $\tau_0=5.60\times10^{-4}$.\label{fig:R}}
\end{figure}
\vspace{-10pt}

{\small
\begin{table}[htb!]
\caption{The $L^2$-norm error $e^N$ for case (c)
at $t_N=1$,  by the corrected scheme \eqref{eqn:BDF-CQ-dw}, with $h=1/10$.}
\label{tab:diff-wave-cond}
\centering
     \begin{tabular}{|c|c|lllll|c|}
     \hline
      $k$ ($\alpha^*)$ &  $  N$  &$100$ &$200$ &$400$ &$800$ & $1600$  &rate \\
      \hline
       $3$ (1.91)         & 1.95    &2.96e-5 &3.84e-6 &5.00e-7 &6.40e-8 &8.27e-9 &$\approx$ 2.96 ($3.00$)\\
     \hline
       $4 $ (1.68)        & 1.75  &2.08e-6 &1.43e-7 &9.29e-9 &5.92e-10 &3.74e-11 &$\approx$ 3.98 ($4.00$)\\
     \hline
       $5 $ (1.40)        & 1.5  &7.29e-8 &2.49e-10 &6.22e-12 &1.72e-13  &5.05e-15  &$\approx$ 5.14 ($5.00$)\\
     \hline
       $6 $ (1.11)        & 1.5 &5.67e-2  &2.56e-10  &6.88e-13 &1.05e-14  &1.62e-16 &$\approx$ 6.03 ($6.00$)\\
      \hline
     \end{tabular}
\end{table}}

{\small
\begin{table}[htb!]
\caption{The $L^2$-norm error $e^N$ for case (c)
at $t_N=1$,  by the corrected scheme \eqref{eqn:BDF-CQ-dw}, $h=1/100$.}
\label{tab:diff-wave}
\centering
     \begin{tabular}{|c|c|lllll|c|}
     \hline
      $k$ ($\alpha^*)$ &  $  N$  &$100$ &$200$ &$400$ &$800$ & $1600$  &rate \\
      \hline
                       & 1.25                &2.34e-5 &5.85e-6 &1.46e-6 &3.65e-7 &9.14e-8 &$\approx$ 2.00 ($2.00$)\\
       $2$ (2.00)      & 1.5                &6.87e-5 &1.69e-5 &4.18e-6 &1.04e-6 &2.59e-7 &$\approx$ 2.00 ($2.00$)\\
                       & 1.75                 &3.15e-4 &8.55e-5 &2.21e-5 &5.62e-6 &1.42e-6 &$\approx$ 1.98 ($2.00$)\\
     \hline
                          & 1.25  &1.54e-8 &1.66e-9 &3.20e-10 &4.80e-11 &6.33e-12 &$\approx$ 3.00 ($3.00$)\\
       $3$ (1.91)         & 1.5    &4.22e-6 &5.12e-7 &6.30e-8 &7.82e-9 &9.74e-10 &$\approx$ 3.00 ($3.00$)\\
                          & 1.75  &5.27e-5 &6.43e-6 &7.93e-7 &9.78e-8 &1.15e-8 &$\approx$ 3.02 ($3.00$)\\
     \hline
       $4 $ (1.68)        & 1.25  &2.74e-8 &1.64e-9 &1.00e-10 &6.20e-12 &3.63e-13 &$\approx$ 4.00 ($4.00$)\\
                          &1.5    &1.88e-7 &1.27e-8 &8.22e-10 &5.19e-11 &3.07e-12 &$\approx$ 4.00 ($4.00$)\\
                   \hline
       $5 $ (1.40)        & 1.1  &3.32e-10 &9.52e-12 &2.85e-13 &8.71e-15  &2.69e-16  &$\approx$ 5.00 ($5.00$)\\
                          & 1.3  &2.38e-7  &1.28e-10  &1.08e-12  &3.40e-14  &1.06e-15  &$\approx$ 5.00 ($5.00$)\\
                   \hline
       $6 $ (1.11)        & 1.05 &3.31e-5  &1.94e-7  &1.28e-10 &7.58e-17  &7.39e-19 &$\approx$ 6.68 ($6.00$)\\
      \hline
     \end{tabular}
\end{table}}
\vspace{-5pt}

\section*{Acknowledgements}
The authors are grateful to Professor Christian Lubich for his valuable comments on an earlier version of the paper.
The work of B. Jin is supported by UK EPSRC grant EP/M025160/1. The work of B. Li is supported by The Hong
Kong Polytechnic University (A/C code: 1-ZE6L). The work of Z. Zhou is supported in part by the AFOSR  MURI
center for Material Failure Prediction through peridynamics and the ARO MURI Grant W911NF-15-1-0562.

\appendix

\section{An alternative view on the correction scheme \eqref{eqn:BDF-CQ}}\label{app:correction-Lubich}
In this appendix, we discuss the connection between our approach and the approach studied in \cite{Lubich:2004}.
The observation of this connection is due to Professor Christian Lubich.

For the following integral and its convolution quadrature approximation
\begin{equation}\label{eqn:Integral-u}
u(t)=\frac{1}{2\pi \rm i }\int_{\Gamma_{\theta,\delta}}F(z)e^{-tz} d z  \quad\mbox{and}\quad U^n=\frac{1}{2\pi\rm i}\int_{\Gamma_{\theta,\delta}^\tau}\tau F(\delta_\tau(e^{-\tau z}))e^{-t_nz}d z,
\end{equation}
Lubich \cite[Theorem 2.1]{Lubich:2004} showed the following error estimate away from $t=0$:
\begin{align}\label{Error}
|U^n-u(t_n)|\le Ct_n^{\nu-k-1}\tau^k ,
\end{align}
where $\nu\in\mathbb{R}$ is a parameter in the kernel estimate
$|\frac{d^m}{d z^m}F(z)|\le C|z|^{-\nu-m}$, $m\ge 0$.
In particular, if we choose $F(z)=(z^\alpha-A)^{-1}z^{-\ell-1}\partial_t^\ell f(0)$ in \eqref{eqn:Integral-u}, then
\begin{equation*}
   u(t)=\frac{1}{2\pi\rm i}\int_{\Gamma_{\theta,\delta}}(z^\alpha-A)^{-1}z^{-\ell-1}\partial_t^\ell f(0) e^{-tz} d z,\vspace{-10pt}
\end{equation*}\vspace{-10pt}
and
\begin{equation*}
   U^n=\frac{1}{2\pi\rm i}\int_{\Gamma_{\theta,\delta}^\tau} \tau (\delta_\tau(e^{-\tau z})^\alpha-A)^{-1}\delta_\tau(e^{-\tau z})^{-\ell-1}\partial_t^\ell f(0)e^{-t_nz}d z
\end{equation*}
are the integral representations of the solutions of
\begin{align}\label{Original-PDE-}
&\begin{aligned}
    &\partial_t^\alpha u(t)-Au(t) =\frac{t^{\ell}}{\ell !}\partial_t^\ell f(0),\qquad \mbox{with}\quad
    u(0)=0
\end{aligned}\\
&\begin{aligned}\label{eqn:correction-Lubich}
  &\bar\partial_\tau^\alpha U^n - AU^n = \omega_n^{(\ell)}\partial_t^\ell f(0),\qquad \mbox{with}\quad
  U^0=0,
\end{aligned}
\end{align}
respectively, which are solutions and approximations of \eqref{eqn:fde} corresponding to a
single component in the source splitting \eqref{decomp-rhs}.
The weights $\{\omega_n^{(\ell)}\}_{n=0}^\infty$ are the coefficients in the power series expansion
$\delta_\tau(\zeta)^{-\ell-1} = \sum_{n=0}^\infty \omega_n^{(\ell)}\zeta^n .$
By \cite[Theorem 2.1]{Lubich:2004}, the approximation $\{U^n\}$ has the desired accuracy \eqref{Error}.
Our scheme \eqref{eqn:BDF-CQ} is connected to \eqref{eqn:correction-Lubich} as follows:
we replace $\delta(\zeta)^{-\ell-1}$ by an $O(|\zeta-1|^{k-\ell-1})$ close approximation
$\frac{\gamma_\ell(\zeta)}{\ell !} +\sum_{j=1}^{k-1} b_{\ell,j}^{(k)}\zeta^j$, cf. \eqref{crit:b}.
Our choice of the kernel leads to
\begin{align}\label{CQ-eq-new}
\bar\partial_\tau^\alpha U^n- AU^n
= \frac{t_n^{\ell}}{\ell !} \partial_t^\ell f(0)
+  b_{\ell,n}^{(k)}\tau^{\ell}\partial_t^\ell f(0),
\end{align}
which formally differs from \eqref{Original-PDE-} only in the starting $k-1$ steps, as $b_{\ell,n}^{(k)}=0$ for $n\ge k$ in our scheme.
Further, \eqref{CQ-eq-new} is minimal (or optimal) in the sense that it is the unique correction scheme that only
modifies the starting $k-1$ steps while having an accuracy of $O(\tau^k)$.

\section{Proof of Theorem \ref{thm:solurep}}\label{app:sol-rep}

We need the following estimates on the function $\delta_\tau(e^{-z\tau})$.
\begin{lemma}\label{lem:delta}
Let $\alpha\in(0,2)$. For any $\varepsilon$, there exists $\theta_\varepsilon\in (\pi/2,\pi)$ such that for any fixed
$\theta\in (\pi/2,\theta_\varepsilon)$,
there exist positive constants $c,c_1,c_2$ (independent of $\tau$) such that
\begin{equation*}
  \begin{aligned}
& c_1|z|\leq
|\delta_\tau(e^{-z\tau})|\leq c_2|z|,
&&\delta_\tau(e^{-z\tau})\in \Sigma_{\pi-\vartheta_k+\varepsilon}, \\
& |\delta_\tau(e^{-z\tau})-z|\le c\tau^k|z|^{k+1},
&& |\delta_\tau(e^{-z\tau})^\alpha-z^\alpha|\leq c\tau^k|z|^{k+\alpha},
&& \forall\, z\in \Gamma_{\theta,\delta}^\tau .
\end{aligned}
\end{equation*}
\end{lemma}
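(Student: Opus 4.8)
The plan is to reduce everything to the rescaled variable $w=z\tau$ and the single scalar function $q(w):=\delta(e^{-w})$, so that $\delta_\tau(e^{-z\tau})=\tau^{-1}q(w)$ and $z=\tau^{-1}w$. Under this substitution each claimed estimate becomes a $\tau$-independent statement about $q$: the two-sided bound reads $c_1|w|\le|q(w)|\le c_2|w|$, the consistency bound reads $|q(w)-w|\le c|w|^{k+1}$, and the fractional analogue reads $|q(w)^\alpha-w^\alpha|\le c|w|^{k+\alpha}$. The geometry is favorable because the truncation $|\Im z|\le\pi/\tau$ in $\Gamma_{\theta,\delta}^\tau$ becomes $|\Im w|\le\pi$, and since $|\arg w|\le\theta$ this confines $w$ to a \emph{bounded} region with $|w|\le \pi/\sin\theta$. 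I would then split this region into a near-origin piece $|w|\le r_0$ and a bounded-away piece $r_0\le|w|\le \pi/\sin\theta$, treating each separately.

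On the near-origin piece I would exploit that $\delta(\zeta)=\sum_{j=1}^k\frac1j(1-\zeta)^j$ is exactly the degree-$k$ truncation of $-\log\zeta=\sum_{j\ge1}\frac1j(1-\zeta)^j$. Since $|\Im w|<\pi$ gives $-\log e^{-w}=w$, this yields the exact identity $q(w)-w=-\sum_{j>k}\frac1j(1-e^{-w})^j$, convergent once $|1-e^{-w}|<1$, i.e. for $r_0$ small. Bounding the tail and using $|1-e^{-w}|\le c|w|$ gives $|q(w)-w|\le c|w|^{k+1}$, which is the consistency estimate; the two-sided bound and the sector membership then follow for free, because $q(w)=w\,(1+O(|w|^k))$ keeps $q(w)$ in a sector barely larger than $\Sigma_\theta$, which sits inside $\Sigma_{\pi-\vartheta_k+\varepsilon}$ as soon as $\theta<\pi-\vartheta_k+\varepsilon$ (possible since $\pi-\vartheta_k\ge\pi/2$ and $\theta$ is taken close to $\pi/2$).

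On the bounded-away piece I would argue by continuity and compactness. The set in question is compact and $q$ is continuous and nonvanishing there — the nonvanishing uses that the only zero of $\delta$ near the unit circle is at $\zeta=1$, i.e. $w=0$, which has been excluded — so $|q(w)|/|w|$ is squeezed between positive constants, giving the two-sided bound; likewise $|q(w)-w|$ is bounded while $|w|^{k+1}\ge r_0^{k+1}$, so the consistency bound persists with an adjusted constant. The sector membership is the genuinely delicate point: at $\theta=\pi/2$ one has $w=\mathrm{i}t$ and $\zeta=e^{-\mathrm{i}t}$ on the unit circle, so $q(w)$ lies on the boundary locus of BDF$k$, and $A(\vartheta_k)$-stability is precisely the statement that this locus stays in $\overline{\Sigma_{\pi-\vartheta_k}}$. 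Opening the angle slightly beyond $\pi/2$ moves $\zeta$ just outside the unit disk, and a continuity/perturbation argument on the compact parameter range shows $q(w)\in\Sigma_{\pi-\vartheta_k+\varepsilon}$ provided $\theta$ is close enough to $\pi/2$; this is exactly what forces the choice $\theta<\theta_\varepsilon$ and produces the $+\varepsilon$ loss.

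Finally, the fractional estimate follows by factoring $q(w)^\alpha-w^\alpha=w^\alpha\big((q(w)/w)^\alpha-1\big)$. Near the origin the consistency bound gives $q(w)/w=1+O(|w|^k)$, so $q(w)/w$ sits in a small neighborhood of $1$ away from the branch cut and $|(q(w)/w)^\alpha-1|\le c\,|q(w)/w-1|=c\,|q(w)-w|/|w|\le c|w|^k$, whence $|q(w)^\alpha-w^\alpha|\le c|w|^{k+\alpha}$; away from the origin the quantity $|q(w)^\alpha-w^\alpha|$ is bounded while $|w|^{k+\alpha}$ is bounded below, so the bound persists by the usual compactness adjustment. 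Undoing the substitution $w=z\tau$ and multiplying through by the appropriate powers of $\tau$ recovers all four stated inequalities. I expect the main obstacle to be the sector membership on the bounded-away piece: it is the only step that genuinely invokes $A(\vartheta_k)$-stability of BDF$k$ and requires the careful perturbation off the imaginary axis that fixes $\theta_\varepsilon$ and the $\varepsilon$-enlargement.
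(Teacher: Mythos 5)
Your proposal is correct, and it reorganizes the argument in a way that genuinely differs from the paper in three of the four estimates. The paper works directly on $\Gamma^\tau_{\theta,\delta}$ without rescaling: it gets the two-sided bound from the factorization $\delta(\zeta)=(1-\zeta)\eta(\zeta)$ with $\eta$ nonvanishing near the unit circle (Creedon--Miller) together with $|1-e^{-z\tau}|\asymp|z\tau|$; it simply cites Thom\'ee's eq.\ (10.6) for the consistency estimate $|\delta_\tau(e^{-z\tau})-z|\le c\tau^k|z|^{k+1}$; and it proves the fractional estimate by integrating $\alpha\xi^{\alpha-1}$ along the segment joining $z$ and $\delta_\tau(e^{-z\tau})$. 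Your rescaling $w=z\tau$ plus the near/far splitting replaces all of this with a self-contained argument: the exact identity $q(w)-w=-\sum_{j>k}\frac1j(1-e^{-w})^j$ (valid since $\delta$ truncates $-\log\zeta$ and $|\Im w|\le\pi$) proves the consistency bound from scratch rather than by citation, and then delivers the two-sided bound and near-origin sector membership as corollaries of $q(w)=w(1+O(|w|^k))$; the multiplicative factorization $q^\alpha-w^\alpha=w^\alpha\bigl((q/w)^\alpha-1\bigr)$ cleanly sidesteps a wrinkle in the paper's segment-integral argument, where bounding $\max_\xi|\xi|^{\alpha-1}$ for $\alpha<1$ requires a lower bound on $|\xi|$ along the segment that the paper only partially spells out. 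Where you coincide with the paper is the one step you correctly flag as delicate: the sector membership away from the origin, where both arguments invoke $A(\vartheta_k)$-stability on the unit circle and then perturb off it; the paper does this quantitatively via the mean value theorem, showing the angle opens by at most $c|\theta-\pi/2|$, while you use compactness and uniform continuity — which is legitimate precisely because your near-origin series analysis has already removed the neighborhood of $w=0$, where an absolute-perturbation argument would fail to control the argument of $q(w)$ (the paper's relative bound $\le c|\theta-\pi/2|\,|\delta_\tau(e^{-\mathrm{i}|z|\tau\sin\theta})|$ is what handles that regime in their version). Your split thus buys self-containedness and uniform, $\tau$-independent constants by construction, at the cost of being less quantitative about $\theta_\varepsilon$ than the paper's explicit $c|\theta-\pi/2|<\varepsilon$.
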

\begin{proof}
Since the function $\delta(\zeta)/(1-\zeta)$ has no zero in a neighborhood $\mathcal{N}$ of the unit circle
\cite[Proof of Lemma 2]{CreedonMiller:1975} and for $\theta$ sufficiently close to $\pi/2$, $e^{-z\tau}$ lies
in the neighborhood $\mathcal{N}$, there are positive constants $c_1'$ and $c_2'$ such that
$$
c_1'\le \frac{|\delta(e^{-z\tau})| }{|1-e^{-z\tau}| } =\frac{|\delta_\tau(e^{-z\tau})| }{|(1-e^{-z\tau})/\tau | }  \le c_2',\qquad \forall\, z\in \Gamma_{\theta,\delta}^\tau .
$$
Since $\widetilde c_1|z\tau|\le |1-e^{-z\tau}|\le \widetilde c_2 |z\tau|$ for $z\in\Gamma_{\theta,\delta}^\tau$, the first estimate follows.

When $|\zeta|\le 1$ and $\zeta\neq 0$, we have $\delta_\tau(\zeta)\in \Sigma_{\pi-\vartheta_k}$ for the $A(\vartheta_k)$ stable BDF$k$ \cite{HairerWanner:1996}. Hence, by expressing $e^{-z\tau}$ as $e^{-|z|\tau \cos(\theta)}e^{-{\rm i}|z|\tau \sin(\theta)}$, we have
\begin{align*}
|\delta_\tau(e^{-z\tau})-\delta_\tau(e^{-{\rm i}|z|\tau \sin(\theta)})|
&=|\delta_\tau(e^{-|z|\tau \cos(\theta)}e^{-{\rm i}|z|\tau \sin(\theta)})-\delta_\tau(e^{-{\rm i}|z|\tau \sin(\theta)})|\\
&\le ce^{-\sigma|z|\tau \cos(\theta)} \left |\delta_\tau'(e^{-\sigma|z|\tau \cos(\theta)}e^{-{\rm i}|z|\tau \sin(\theta)}) z\tau\cos(\theta)\right|
\end{align*}
for some $\sigma\in(0,1)$, by the mean value theorem.
For $\theta$ close to $\pi/2$ and $z\in\Gamma_{\theta,\delta}^\tau$, by
Taylor expansion, $|z|\tau\leq \pi/\sin\theta$ and the first estimate, we have
$$
\tau |\delta_\tau'(e^{-\sigma|z|\tau \cos(\theta)}e^{-{\rm i}|z|\tau \sin(\theta)})|
\le c\quad \mbox{and}\quad
|\delta_\tau(e^{-{\rm i}|z|\tau \sin(\theta)})|\ge c|z| .
$$
Consequently, we deduce
\begin{align}\label{adjkjk}
|\delta_\tau(e^{-|z|\tau \cos(\theta)}e^{-{\rm i}|z|\tau \sin(\theta)})-\delta_\tau(e^{-{\rm i}|z|\tau \sin(\theta)})|
&\le c|\cos(\theta)| |\delta_\tau(e^{-{\rm i}|z|\tau \sin(\theta)})| \nonumber \\
&\le c|\theta-\pi/2| |\delta_\tau(e^{-{\rm i}|z|\tau \sin(\theta)})| .
\end{align}
Hence, $\delta_\tau(e^{-\tau z})$ is in a sector $\Sigma_{\pi-\vartheta_k+c|\theta-\pi/2|}$. If $\theta>\pi/2$ is sufficiently close to $\pi/2$, then $c|\theta-\pi/2|<\varepsilon$. This proves the second estimate.

The third estimate is given in \cite[eq. (10.6)]{Thomee:2006}.
The last estimate follows from
\begin{align}\label{skkm}
|\delta_\tau(e^{-z\tau})^\alpha-z^\alpha |
=\alpha \left|\int_{z}^{\delta_\tau(e^{-z\tau})} \xi^{\alpha-1}d\xi \right|
\le  \max_{\xi}|\xi|^{\alpha-1}|\delta_\tau(e^{-z\tau})-z| ,
\end{align}
where $\xi$ lies in the line segment with end points $\delta_\tau(e^{-z\tau})$ and $z$. Since
$\Im \delta_\tau(e^{-z\tau})>0$
for $z\in \Gamma_{\theta,\delta}^\tau$ with $\Im z>0$, we have by the first estimate that
$$|\xi|^{\alpha-1}\leq \max(|z|,|\delta_\tau(e^{-z\tau})|)^{\alpha-1}
\leq c|z|^{\alpha-1}.$$
This inequality and \eqref{skkm} yield the last estimate.
\end{proof}

{\it Proof of Theorem {\rm\ref{thm:solurep}}.}$\,\,$
The functions $W^n$, $n=1,\dots,N$,  satisfy (with $W^0=0$):
\begin{equation*}
\begin{aligned}
&\bPtau^\alpha W^n  -  AW^n
=(1+a_n^{(k)})(A v+f(0))+
\sum_{\ell=1}^{k-2}  \bigg(\frac{t_n^\ell}{\ell !}+b_{\ell,n}^{(k)}\tau^{\ell} \bigg)  \partial_t^\ell f (0)
    + R_{k}(t_n) ,
&&1\le n\le k-1,\\[-5pt]
&\bPtau^\alpha W^n  -  AW^n = A v+f(0) +
\sum_{\ell=1}^{k-2}   \frac{t_n^\ell}{\ell !}  \partial_t^\ell f (0) + R_{k}(t_n) , &&k\le n\le N.
\end{aligned}
\end{equation*}
By multiplying both sides by $\zeta ^n$, summing over $n$ and collecting terms, we obtain
\begin{equation*}
\begin{aligned}
&\sum_{n=1}^\infty \zeta ^n \bPtau^\alpha W^n - \sum_{n=1}^\infty AW^n  \zeta ^n\\
&= \bigg( \sum_{n=1}^\infty \zeta ^n+ \sum_{j=1}^{k-1}a_j^{(k)} \zeta ^j \bigg)  (Av+f(0) )
+ \sum_{\ell=1}^{k-2}\bigg(\sum_{n=1}^{\infty} \frac{t_n^\ell}{\ell !}\zeta ^n+ \sum_{j=1}^{k-1} b_{\ell,j}^{(k)}
\tau^{\ell} \zeta ^j\bigg) \partial_t^\ell f (0)+\widetilde R_{k}(\zeta ) \\
&=\bigg(\frac{\zeta }{1-\zeta } + \sum_{j=1}^{k-1}a_j^{(k)}\zeta ^j \bigg)(A v+f(0) )
+\sum_{\ell=1}^{k-2}\bigg(\frac{\gamma_\ell(\zeta)}{\ell !}+
\sum_{j=1}^{k-1} b_{\ell,j}^{(k)} \zeta ^j\bigg)\tau^\ell  \partial_t^\ell f (0)+\widetilde R_{k}(\zeta ) ,
\end{aligned}
\end{equation*}
where $\widetilde R_{k}(\zeta ) = \sum_{n=1}^{\infty} R_{k}(t_n)\zeta ^n$ and elementary identities
\begin{equation}\label{eqn:basic-sum}
  \sum_{n=1}^\infty \zeta ^n = \frac{\zeta }{1-\zeta }\quad \mbox{and}\quad \sum_{n=1}^\infty n^\ell \zeta ^n = \left(\zeta \frac{d}{d\zeta }\right)^\ell\frac{1}{1-\zeta }:=\gamma_\ell(\zeta).
\end{equation}
Next we simplify the summations on both sides. Since $W^0=0$, by the convolution rule,
$\sum_{n=1}^\infty \zeta ^n\bPtau^\alpha W^n=\delta_\tau(\zeta )^\alpha \widetilde{W}(\zeta )$,
and consequently, we obtain
\begin{equation*}
 \begin{aligned}
\widetilde W(\zeta )
&=  K(\delta_\tau(\zeta ))\bigg[\tau^{-1} \mu(\zeta ) (Av + f(0) )
+ \sum_{\ell=1}^{k-2}\delta_\tau(\zeta )\bigg(\frac{\gamma_\ell(\zeta)}{\ell !}+
 \sum_{j=1}^{k-1} b_{\ell,j}^{(k)}\zeta ^j\bigg)\tau^\ell  \partial_t^\ell f (0)+ \delta_\tau(\zeta ) \widetilde R_{k}(\zeta )\bigg] .
\end{aligned}
\end{equation*}
where the operator $K$ is given by \eqref{eqn:kernel}, and the two polynomials $\mu(\zeta)$ and $\gamma_\ell(\zeta)$ are given by \eqref{eqn:gamma-mu}.
Since $\widetilde  W(\zeta )$ is analytic with respect to $\zeta $ in the unit disk on the complex plane,
thus Cauchy's integral formula implies the following representation for arbitrary $\varrho\in(0,1)$
\begin{equation}\label{repr-Wn}
    W^n = \frac{1}{2 \pi\mathrm{i}}\int_{|\zeta |=\varrho}\zeta ^{-n-1} \widetilde  W(\zeta )  d\zeta
    = \frac{\tau}{2\pi\mathrm{i}}\int_{\Gamma^\tau} e^{zt_{n}}\widetilde  W(e^{-z\tau})\, dz,
\end{equation}
where the second equality follows from the change of variable $\zeta =e^{-z\tau}$, and $\Gamma^\tau$ is given by
\begin{equation*}
   \Gamma^\tau:=\{ z=-\ln(\varrho)/\tau+\mathrm{i} y: \, y\in{\mathbb R}\,\,\,\mbox{and}\,\,\,|y|\le {\pi}/{\tau} \}.
\end{equation*}
Note that
\begin{itemize}
\item[(1)] $\eta(\zeta):=\delta_\tau(\zeta)/(1-\zeta)$ is a polynomial without roots in a neighborhood $\mathcal{N}$ of the unit circle \cite{CreedonMiller:1975}. Thus, $\eta(\zeta)^\alpha$ is analytic in $\mathcal{N}$.
\item[(2)] By choosing the angle $\theta$ sufficiently close to $\pi/2$, $\varrho$ sufficiently close to $1$ and $0<\delta<-\ln(\varrho/\tau)$,
the function $e^{-\tau z}$ lies in $\mathcal{N}$ for
$$z\in \Sigma_{\theta,\delta}^\tau=\{z\in\Sigma_{\theta}:|z|\ge\delta,\,\, |{\rm Im}(z)|\le \tau/\pi,\,\,\, {\rm Re}(z)\le -\ln(\varrho)/\tau \} ;$$
\item[(3)] $(1-e^{-\tau z})^\alpha$ is analytic for $z\in {\mathbb C}\backslash(-\infty,0] \supset  \Sigma_{\theta,\delta}^\tau $.
\end{itemize}
These properties ensure that $\delta_\tau(e^{-\tau z})^\alpha=\tau^{-\alpha}(1-e^{-\tau z})^\alpha\eta(e^{-\tau z})^\alpha$ is analytic for $z\in \Sigma_{\theta,\delta}^\tau$.
By choosing $\varepsilon$ small enough, Lemma \ref{lem:delta} implies that $0\neq \delta_\tau(e^{-\tau z})^\alpha\in \Sigma_{\alpha(\vartheta_k+\varepsilon)}\subset \Sigma_{\pi-\varepsilon}$ for
$z\in \Sigma_{\theta,\delta}^\tau$. Thus $K(\delta_\tau(e^{-\tau z}))=\delta_\tau(e^{-\tau z})^{-1}(\delta_\tau(e^{-\tau z})^\alpha-A)^{-1}$ is analytic for $z\in \Sigma_{\theta,\delta}^\tau$, which is a region enclosed by $\Gamma^\tau$, $\Gamma^\tau_{\theta,\delta}$ and the two lines $\Gamma_{\pm}^\tau:
={\mathbb R}\pm \mathrm{i}\pi/\tau$ (oriented from left to right).
Since the values of $e^{zt_{n}}\widetilde  W
(e^{-z\tau})$ on $\Gamma_{\pm}^\tau$ coincide, Cauchy's theorem allows deforming the contour $\Gamma^\tau$
to $\Gamma_{\theta,\delta}^\tau$ in the integral \eqref{repr-Wn} to obtain the desired representation.
\endproof

\section{Proof of Theorem \ref{thm:conv}}\label{app:conv-sub}

\begin{lemma}\label{lem:kernel}
Let Criteria \eqref{crit:mu}  and \eqref{crit:b} hold. Then for $z\in \Gamma_{\theta,\delta}^\tau$, there hold
\begin{align}
    \|\mu(e^{-z\tau})K(\delta_\tau(e^{-z\tau})) -K(z)\| & \leq  c \tau^k |z|^{k-1-\alpha},\label{eqn:est-Kmu}\\
    \bigg\|(\delta_\tau(e^{-z\tau})^\alpha-A)^{-1} \bigg(\frac{1}{\ell !}\gamma_\ell(e^{-z\tau})+ \sum_{j=1}^{k-1} b_{\ell,j}^{(k)} e^{-jz\tau}\bigg)\tau^{\ell+1}- z^{-\ell}K(z) \bigg\| & \leq c \tau^k |z|^{k-\ell-1-\alpha}.\label{eqn:est-Kb}
  \end{align}
\end{lemma}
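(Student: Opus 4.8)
The plan is to reduce each difference to one term controlled by the relevant algebraic criterion plus a few resolvent-comparison terms, writing $d:=\delta_\tau(e^{-z\tau})$ throughout and recalling $\delta_\tau(\zeta)=\delta(\zeta)/\tau$, so that $\tau^{\ell+1}\delta(\zeta)^{-\ell-1}=d^{-\ell-1}$. The inputs I would use repeatedly, all from Lemma~\ref{lem:delta} and \eqref{eqn:resol}, are: the comparability $|d|\sim|z|$; the sector membership of $d$, which makes $d^\alpha$ lie in a sector where \eqref{eqn:resolvent-estimate} applies and hence $\|(d^\alpha-A)^{-1}\|\le c|z|^{-\alpha}$ and $\|K(d)\|\le c|z|^{-1-\alpha}$; and the closeness bounds $|d-z|\le c\tau^k|z|^{k+1}$ and $|d^\alpha-z^\alpha|\le c\tau^k|z|^{k+\alpha}$. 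I also convert the algebraic estimates, phrased in powers of $|1-\zeta|$ with $\zeta=e^{-z\tau}$, into powers of $\tau|z|$ via $|1-e^{-z\tau}|\le c\tau|z|$ on $\Gamma_{\theta,\delta}^\tau$.

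For \eqref{eqn:est-Kmu} I first split $\mu(e^{-z\tau})K(d)-K(z)=(\mu(e^{-z\tau})-1)K(d)+(K(d)-K(z))$. Criterion \eqref{crit:mu} bounds $|\mu(e^{-z\tau})-1|\le c(\tau|z|)^k$, and multiplying by $\|K(d)\|\le c|z|^{-1-\alpha}$ gives the target. For $K(d)-K(z)=d^{-1}(d^\alpha-A)^{-1}-z^{-1}(z^\alpha-A)^{-1}$ I split once more into $d^{-1}[(d^\alpha-A)^{-1}-(z^\alpha-A)^{-1}]+(d^{-1}-z^{-1})(z^\alpha-A)^{-1}$. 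The resolvent identity turns the bracket into $(z^\alpha-d^\alpha)(d^\alpha-A)^{-1}(z^\alpha-A)^{-1}$, of size $\le c\tau^k|z|^{k+\alpha}\cdot|z|^{-2\alpha}=c\tau^k|z|^{k-\alpha}$, and $|d^{-1}|\le c|z|^{-1}$ finishes it; the last piece uses $|d^{-1}-z^{-1}|=|z-d|/|dz|\le c\tau^k|z|^{k-1}$ against $\|(z^\alpha-A)^{-1}\|\le c|z|^{-\alpha}$. Each of the three terms contributes $c\tau^k|z|^{k-1-\alpha}$.

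The estimate \eqref{eqn:est-Kb} is analogous after the bookkeeping reduction. Writing $P_\ell(\zeta):=\frac{1}{\ell!}\gamma_\ell(\zeta)+\sum_{j=1}^{k-1}b_{\ell,j}^{(k)}\zeta^j$ and using $\tau^{\ell+1}\delta(\zeta)^{-\ell-1}=d^{-\ell-1}$, I split the left-hand side of \eqref{eqn:est-Kb} as
\[
(d^\alpha-A)^{-1}\tau^{\ell+1}\big[P_\ell(e^{-z\tau})-\delta(e^{-z\tau})^{-\ell-1}\big]+\Big(d^{-\ell-1}(d^\alpha-A)^{-1}-z^{-\ell-1}(z^\alpha-A)^{-1}\Big).
\]
The first summand is handled by \eqref{crit:b}, which gives $\tau^{\ell+1}|P_\ell-\delta^{-\ell-1}|\le c\tau^{\ell+1}(\tau|z|)^{k-\ell-1}=c\tau^k|z|^{k-\ell-1}$, times $\|(d^\alpha-A)^{-1}\|\le c|z|^{-\alpha}$. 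The second summand repeats the $K(d)-K(z)$ argument with exponent $\ell+1$; the only new ingredient is $|d^{-\ell-1}-z^{-\ell-1}|\le c\tau^k|z|^{k-\ell-1}$, obtained from the factorization $z^{\ell+1}-d^{\ell+1}=(z-d)\sum_{i=0}^{\ell}z^i d^{\ell-i}$ together with $|z-d|\le c\tau^k|z|^{k+1}$ and $|d|\sim|z|$.

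The computations are routine once the splittings are in place. I expect the only real care to be needed in aligning powers of $\tau$ and $|z|$ across the two summands of \eqref{eqn:est-Kb}---in particular the factorization bound for $d^{-\ell-1}-z^{-\ell-1}$---and in the systematic translation between the algebraic criteria (stated in $|1-\zeta|$ and in the unnormalized symbol $\delta(\zeta)$) and the analytic estimates on the contour (stated in $|z|$, $\tau$, and $d$), where the identity $\delta_\tau=\delta/\tau$ is indispensable. The resolvent-theoretic input is otherwise immediate from Lemma~\ref{lem:delta} and \eqref{eqn:resol}.
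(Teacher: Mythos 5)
Your proof is correct and follows essentially the same route as the paper's: both reduce \eqref{eqn:est-Kmu} to the splitting $(\mu-1)K(\delta_\tau(e^{-z\tau}))+\bigl(K(\delta_\tau(e^{-z\tau}))-K(z)\bigr)$ handled via Criterion \eqref{crit:mu}, the resolvent identity and Lemma \ref{lem:delta}, and both reduce \eqref{eqn:est-Kb} to the same two summands, one controlled by Criterion \eqref{crit:b} after the $\tau^{\ell+1}\delta(\zeta)^{-\ell-1}=\delta_\tau(\zeta)^{-\ell-1}$ normalization and the other by the power-difference bound $|\delta_\tau(e^{-z\tau})^{\ell+1}-z^{\ell+1}|\le c\tau^k|z|^{k+\ell+1}$. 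The only differences are cosmetic (e.g., which resolvent factor the scalar differences are paired with), so nothing further is needed.
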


{\it Proof.}$\,\,$
Since $|1-e^{-z\tau}|\leq c\tau|z|$ for $z\in \Gamma_{\theta,\delta}^\tau$,
by Criterion \eqref{crit:mu}, there holds
$  |\mu(e^{-z\tau})-1|\leq c|1-e^{-z\tau}|^k\leq c \tau^k |z|^k.$
Meanwhile, by the triangle inequality, we have
\begin{equation*}
  \begin{aligned}
  \|K(\delta_\tau(e^{-z\tau}))-K(z)\| & = \|\delta_\tau(e^{-z\tau})^{-1}(\delta_\tau(e^{-z\tau})^\alpha-A)^{-1}-z^{-1}(z^\alpha-A)^{-1}\|\\
  & \leq |\delta_\tau(e^{-z\tau})^{-1}-z^{-1}|\|(\delta_\tau(e^{-z\tau})^\alpha-A)^{-1}\| \\
  &\qquad+ |z|^{-1}\|(\delta_\tau(e^{-z\tau})^\alpha-A)^{-1}-(z^\alpha-A)^{-1}\|.
\end{aligned}
\end{equation*}
The identity $(\delta_\tau(e^{-z\tau})^\alpha-A)^{-1}-(z^\alpha-A)^{-1}=
(z^\alpha-\delta_\tau(e^{-z\tau})^\alpha)(\delta_\tau(e^{-z\tau})^\alpha-A)^{-1}(z^\alpha-A)^{-1}$,
Lemma \ref{lem:delta} and the resolvent estimate \eqref{eqn:resol} imply directly
$ \|K(\delta_\tau(e^{-z\tau}))-K(z)\| \leq c|\tau|^k |z|^{k-1-\alpha}.$
Consequently, we obtain the estimate \eqref{eqn:est-Kmu} by
\begin{equation*}
  \begin{aligned}
    \|\mu(e^{-z\tau})K(\delta_\tau(e^{-z\tau})) &-K(z)\|  \leq|\mu(e^{-z\tau})-1|\|K(\delta_\tau(e^{-z\tau}))\| \\
      & + \|K(\delta_\tau(e^{-z\tau}))-K(z)\|  \le  c \tau^k |z|^{k-1-\alpha}\quad \forall z\in \Gamma_{\theta,\delta}^\tau.
  \end{aligned}
\end{equation*}
Next we show the estimate \eqref{eqn:est-Kb}. By Lemma \ref{lem:delta}, there holds
\begin{equation*}
  |\delta_\tau(e^{-z\tau})^{\ell+1}-z^{\ell+1}| \leq c|\delta_\tau(e^{-z\tau})-z||z|^\ell \leq c \tau^k |z|^{k+\ell+1} \quad \forall z\in\Gamma_{\theta,\delta}^\tau.
\end{equation*}
By Criterion \eqref{crit:b}, there holds
\begin{equation*}
\bigg|\frac{\gamma_\ell(e^{-z\tau})}{\ell !}+ \sum_{j=1}^{k-1} b_{\ell,j}^{(k)} e^{-jz\tau} - \frac{1}{\delta(e^{-z\tau})^{\ell+1}}\bigg|\leq c\tau^{k-\ell-1}|z|^{k-\ell-1} \quad \forall z\in \Gamma_{\theta,\delta}^\tau.
\end{equation*}
Consequently, for any $z\in \Gamma_{\theta,\delta}^\tau$, we have
\begin{equation*}
\begin{aligned}
  & \bigg\|(\delta_\tau(e^{-z\tau})^\alpha-A)^{-1} \bigg(\frac{1}{\ell !}\gamma_\ell(e^{-z\tau})+ \sum_{j=1}^{k-1} b_{\ell,j}^{(k)} e^{-jz\tau}\bigg)\tau^{\ell+1}- z^{-\ell}K(z) \bigg\|\\[-5pt]
  \leq & \bigg\|(\delta_\tau(e^{-z\tau})^\alpha-A)^{-1}\bigg[\bigg(\frac{1}{\ell !}\gamma_\ell(e^{-z\tau})+ \sum_{j=1}^{k-1} b_{\ell,j}^{(k)} e^{-jz\tau}\bigg)\tau^{\ell+1}-\delta_\tau(e^{-z\tau})^{-\ell-1}\bigg]\bigg\| \\
   & + \|\delta_\tau(e^{-z\tau})^{-\ell}K(\delta_\tau(e^{-z\tau}))-z^{-\ell}K(z)\|\leq c \tau^k |z|^{k-\ell-1-\alpha}.
\end{aligned}
\end{equation*}
This completes the proof of the lemma.
\endproof


{\it Proof of Theorem \ref{thm:conv}.}$\,\,$
By \eqref{eqn:semisol} and \eqref{eqn:Rep-Wh}, we split $U^n-u(t_n)=W^n-w(t_n)$ into
\begin{equation*}
  W^n - w(t_n) = I_1 + \sum_{\ell=1}^{k-2} I_{2,\ell} -I_{3}  + I_4,
\end{equation*}
where the terms $I_1,\ldots,I_4$ are given by
\begin{equation*}
  \begin{aligned}
    I_1=&\frac{1}{2\pi \mathrm{i}}\int_{\Gamma^\tau_{\theta,\delta}}
  e^{zt_n}\Big(\mu(e^{-z\tau})K(\delta_\tau(e^{-z\tau}))-K(z)\Big)(Av  +f(0)) dz,\\
   I_{2,\ell}= & \frac{1}{2\pi\mathrm{i} }\int_{\Gamma^\tau_{\theta,\delta}}e^{zt_{n}}\bigg[\delta_\tau(e^{-z\tau})
\bigg(\frac{\gamma_\ell(e^{-z\tau})}{\ell !}+  \sum_{j=1}^{k-1} b_{\ell,j}^{(k)} e^{-z\tau j}\bigg)\tau^{\ell+1}  K(\delta_\tau(e^{-z\tau}))-z^{-\ell}K(z)\bigg] \partial_t^\ell f (0)\,dz,\\
I_{3}= &\frac{1}{2\pi\mathrm{i} }\int_{\Gamma_{\theta,\delta}\backslash\Gamma^\tau_{\theta,\delta}}e^{zt_{n}}K(z)
\big(Av  +f(0) + z^{-\ell}  \sum_{j=1}^{k-1}\partial_t^\ell f (0) \big)\,dz,\\
I_4 = &\frac{1}{2\pi \mathrm{i}}\int_{\Gamma^\tau_{\theta,\delta}}
  e^{zt}  (\delta_\tau(e^{-z\tau})^\alpha-A)^{-1} \tau \widetilde  R_{k}(e^{-z\tau})dz
  -\frac{1}{2\pi \mathrm{i}}\int_{\Gamma_{\theta,\delta}} (z^\alpha-A)^{-1}\widehat R_{k}(z) dz.
  \end{aligned}
\end{equation*}
It suffices to bound these terms separately. By Lemma \ref{lem:kernel},
and choosing $\delta=t_n^{-1}$ in the contour $\Gamma_{\theta,\delta}^\tau$, we bound the first term $I_1$ by
\begin{equation*}
  \begin{aligned}
      \| I_1\|_{L^2(\Omega)} &\le c \tau^k \| Av+f(0) \|_{L^2(\Omega)} \bigg(\int_{\delta}^{\pi/(\tau\sin\theta)}  e^{r t_n\cos\theta} r^{k-1-\alpha}dr
     + \int_{-\theta}^{\theta}   e^{\delta t_n|\cos\psi|} \delta^{k-\alpha}d\psi \bigg)\\
     &\le c \tau^k(t_n^{\alpha-k}+\delta^{k-\alpha}) \| Av+f(0) \|_{L^2(\Omega)} \le c \tau^k t_n^{\alpha-k}  \|Av+f(0) \|_{L^2(\Omega)}.
  \end{aligned}
\end{equation*}
By appealing to Lemma \ref{lem:kernel} again and choosing $\delta=t_n^{-1}$ in
$\Gamma_{\theta,\delta}^\tau$, we bound the terms $I_{2,\ell}$ by
\begin{align*}
\|I_{2,\ell}\|_{L^2(\Omega)}
    &\le c\tau^k \|f^{(\ell)}\|_{L^2(\Omega)}\bigg(\int_{\delta}^{\pi/(\tau\sin\theta)}  e^{r t_n\cos\theta} r^{k-\ell-1-\alpha} dr
     + \int_{-\theta}^{\theta}  e^{\delta t_n|\cos\psi|} \delta^{k-\ell-\alpha} d\psi \bigg) \\
     &\le c \tau^k t_n^{ \alpha+\ell -k }  \| \partial_t^\ell f (0)\|_{L^2(\Omega)},\quad \ell=1,2...,k-1.
\end{align*}
Direct computation yields the following estimate on $I_3$:
\begin{equation*}
  \|I_3\|_{L^2(\Omega)} \le  c \tau^k \bigg(t_n^{\alpha-k} \|Av+f(0) \|_{L^2(\Omega)} +  \sum_{\ell=1}^{k-2} t_n^{ \alpha+\ell -k }  \| \partial_t^\ell f (0)\|_{L^2(\Omega)} \bigg).
\end{equation*}
The term $I_4$ is the error of the numerical solution with a compatible right-hand side $R_{k}$.
Upon recalling the definition of $R_k$ in \eqref{eqn:Rk}, we use the splitting
$R_k  = \frac{t^{k-1}}{(k-1)!} \partial_t^{k-1} f(0) + \frac{t^{k-1}}{(k-1)!}*\partial_t^k f (t) =:  R_k^{1} +  R_k^{2}.$
Then we have $I_4=  I_4^1 + I_4^2$ with
\begin{equation*}
I_4^i = \frac{1}{2\pi \mathrm{i}}\int_{\Gamma^\tau_{\theta,\delta}}
  e^{zt_n}  (\delta_\tau(e^{-z\tau})^\alpha-A)^{-1} \tau \widetilde  R_{k}^i (e^{-z\tau})dz
  -\frac{1}{2\pi \mathrm{i}}\int_{\Gamma_{\theta,\delta}}e^{zt_n} (z^\alpha-A)^{-1}\widehat R_{k}^i (z) dz.
\end{equation*}
By repeating the preceding argument and \eqref{Taylor-gln}, we have the estimate for $I_4^1$:
\begin{equation*}
  \|  I_4^1 \|_{L^2(\Omega)} \le c \tau^k t_n^{\al-1} \| \partial_t^{k-1}f(0)\|_{L^2(\Omega)},
\end{equation*}
and using the argument in \cite[Lemma 3.7]{JinLiZhou:CN},
\begin{equation*}
  \|  I_4^2 \|_{L^2(\Omega)} \le c \tau^k \int_0^{t_n}(t_n-s)^{\alpha-1}\| \partial_s^k f(s)\|_{L^2(\Omega)}ds.
\end{equation*}
This completes the proof of the theorem.
\endproof

\section{Proof of Theorem \ref{thm:solurep-dw}}\label{app:sol-rep-dw}
Using the splitting \eqref{eqn:decomp-rhs-dw}, the functions $W^n$, $n=1,\dots,N$, satisfy (with $W^0=0$):
\begin{equation*}
\begin{aligned}
&\bPtau^\alpha W^n  -  AW^n
=(1+a_n^{(k)})A v+ (t_n+\tau c_n^{(k)})Ab +
\sum_{\ell=1}^{k-2}  \bigg(\frac{\bar\partial_\tau t_n^\ell}{\ell !}+b_{\ell,n}^{(k)}\tau^{\ell-1}\bigg) \partial_t^{\ell-1}f(0)
    + \bar\partial_\tau R_{k}(t_n) , \\[-5pt]
&\qquad\qquad\qquad\qquad\qquad\qquad\qquad\qquad\qquad
\qquad\qquad\qquad\qquad\qquad\qquad\qquad\qquad 1\le n\le k-1,\\[-5pt]
&\bPtau^\alpha W^n  -  AW^n = A v+t_nAb +
\sum_{\ell=1}^{k-2}   \frac{\bar\partial_\tau t_n^\ell}{\ell !}\partial_t^{\ell-1}f(0) + \bar\partial_\tau R_{k}(t_n) ,
\qquad\qquad\qquad\qquad\quad
k\le n\le N.
\end{aligned}
\end{equation*}
By multiplying both sides by $\zeta ^n$ and summing over $n$, we obtain
\begin{equation*}
\begin{aligned}
\sum_{n=1}^\infty \zeta ^n \bPtau^\alpha W^n - \sum_{n=1}^\infty AW^n  \zeta ^n&=\bigg(\sum_{n=1}^\infty \zeta^n + \sum_{j=1}^{k-1}a_j^{(k)}\zeta ^j \bigg)A v
+\bigg(\sum_{n=1}^\infty \tau n\zeta^n + \sum_{j=1}^{k-1}\tau c_j^{(k)}\zeta^j\bigg)Ab\\
 & \quad + \sum_{\ell=1}^{k-2}\bigg(\sum_{n=1}^\infty \frac{\bar\partial_\tau t_n^\ell}{\ell !}\zeta_n^n +
\sum_{j=1}^{k-1} b_{\ell,j}^{(k)}\tau^{\ell-1} \zeta ^j\bigg) \partial_t^{\ell-1}f(0)+ \sum_{n=1}^\infty \bar\partial_\tau R_{k}(t_n)\zeta^n.
\end{aligned}
\end{equation*}
Using the elementary identities in \eqref{eqn:basic-sum}, the convolution rule
$\sum_{n=1}^\infty \zeta ^n \bPtau^\alpha W^n = \delta_\tau(\zeta)^\alpha\widetilde W$, and
$
 \sum_{n=1}^\infty \zeta^n \bar\partial_\tau \frac{t_n^\ell}{\ell!} = \delta_\tau(\zeta) \sum_{n=0}^\infty \frac{t_n^\ell}{\ell!}\zeta^n = \delta(\zeta)\frac{\tau^{\ell-1}}{\ell!}\gamma_{\ell}(\zeta),
$
we derive
\begin{equation*}
 \begin{aligned}
\widetilde W(\zeta )
&=  K(\delta_\tau(\zeta ))\left[\tau^{-1} \mu(\zeta ) Av + \delta_\tau(\zeta)\bigg(\gamma_1(\zeta)+\sum_{j=1}^{k-1}c_j^{(k)}\zeta^j\bigg)\tau Ab\right.\\
 &\quad \left.+ \sum_{\ell=1}^{k-2}\delta_\tau(\zeta )\bigg(\delta(\zeta)\frac{\gamma_\ell(\zeta)}{\ell !} + \sum_{j=1}^{k-1} b_{\ell,j}^{(k)}\zeta ^j\bigg)\tau^{\ell-1} g^{(\ell)}+ \delta_\tau(\zeta)^2 \widetilde R_{k}(\zeta )\right] .
\end{aligned}
\end{equation*}
{Under Condition \ref{assump:CFL} (i),
by choosing $\varepsilon$ small enough, Lemma \ref{lem:delta} implies that $0\neq \delta_\tau(e^{-\tau z})^\alpha\in \Sigma_{\alpha(\vartheta_k+\varepsilon)}\subset \Sigma_{\pi-\varepsilon}$ for
$z\in \Sigma_{\theta,\delta}^\tau$.
Under Condition \ref{assump:CFL} (ii), we have
$
{\rm dist}(\delta(e^{-z\tau})^\alpha,\tau^\alpha  S(A))>0
$ (cf. Appendix \ref{app:diff-wave}),
where $S(A)$ denotes the closure of the spectrum of $A$ in the complex plane $\mathbb{C}$.
In either case, the operator $K(\delta_\tau(e^{-\tau z}))=\delta_\tau(e^{-\tau z})^{-1}(\delta_\tau(e^{-\tau z})^\alpha-A)^{-1}$ is analytic for $z\in \Sigma_{\theta,\delta}^\tau$, which is the region enclosed by the four curves $\Gamma^\tau_{\theta,\delta}$, $-\ln(\varrho)/\tau+i{\mathbb R}$
and ${\mathbb R}\pm \mathrm{i}\pi/\tau$ (for $\theta$ and $\varrho$ sufficiently close to $\pi/2$ and $1$, respectively).}
Then, like in the proof of Theorem \ref{thm:solurep}, the assertion follows from Cauchy's integral formula and the change of variables $\zeta = e^{-z\tau}$.
\endproof

\section{Proof of Theorem \ref{thm:conv-dw}}\label{app:diff-wave}
Under Condition \ref{assump:CFL}(i), Theorem \ref{thm:conv-dw} can be proved in the same way as Theorem \ref{thm:conv},
using \eqref{eqn:sol-rep-dw-cont} and \eqref{eqn:sol-rep-dw-dis}.
Under Condition \ref{assump:CFL}(ii), it can be proved analogously,
provided that the following resolvent estimate holds:
\begin{align}\label{resolvent-CFL}
\|(\delta_\tau(e^{-z\tau})^\alpha-A)^{-1}\|
\le c |z|^{-\alpha} ,\quad \forall\, z\in \Gamma^\tau_{\theta,\delta}.
\end{align}
To prove \eqref{resolvent-CFL}, we use the following estimate (cf. \cite[Theorem 3.9, Chapter 1, pp. 12]{Pazy:1983}):
\begin{align}\label{djhh}
\|(\delta_\tau(e^{-z\tau})^\alpha-A)^{-1}\|
&=\tau^\alpha\|(\delta(e^{-z\tau})^\alpha-\tau^\alpha A)^{-1}\| \nonumber\\
&\le c\tau^\alpha\, {\rm dist}(\delta(e^{-z\tau})^\alpha,\tau^\alpha  S(A))^{-1},\quad \forall\, z\in \Gamma^\tau_{\theta,\delta} ,
\end{align}
where $S(A)$ denotes the closure of the spectrum of $A$ in $\mathbb{C}$.
For the discrete Laplacian $A=\Delta_h$, we have $S(A)=[-r(A),0]$. Since the angle between the contour
$\delta(e^{-{\rm i}\xi\tau})^\alpha$, $\xi\in[-\frac{\pi}{\tau},\frac{\pi}{\tau}]$, and the segment
$[-r(A),0]$ is $(1-\alpha/2)\pi>0$,
it follows that, for small $\kappa$,
$$
{\rm dist}(\delta(e^{-{\rm i}\xi\tau})^\alpha,\tau^\alpha S(A))
\approx
|\delta(e^{-{\rm i}\xi\tau})^\alpha|\sin[(1-\alpha/2)\pi]
\ge
c|\delta(e^{-{\rm i}\xi\tau})^\alpha|
\quad\mbox{if}\,\,\, |\xi|\tau \le\kappa .
$$
Furthermore, CFL condition \ref{assump:CFL}(ii) implies
\begin{equation*}
{\rm dist}(\delta(e^{-{\rm i}\xi\tau})^\alpha,\tau^\alpha S(A))
\ge c \ge c |\xi\tau|^\alpha \quad\mbox{if}\,\,\, \kappa\le |\xi|\tau  \le \pi .
\end{equation*}
Let $\Gamma^\tau_\theta =\{z\in{\mathbb C}:{\rm arg}(z)
=\theta,\,\,-\frac{\pi}{\tau}\le |z|\sin(\theta) \le \frac{\pi}{\tau }\}.$
Then the angle between the contour $\delta(e^{-z\tau})^\alpha$, $z\in\Gamma^\tau_\theta$,
and the segment $[-r(A),0]$ is $\pi-\alpha\theta>0$ (if $\theta$ is close to $\pi/2$).
For small $\kappa$ and $z\in\Gamma^\tau_\theta$, $|z|\tau  \sin(\theta) \le\kappa$, we have
\begin{equation*}
{\rm dist}(\delta(e^{-z\tau})^\alpha,\tau^\alpha S(A))
\approx |\delta(e^{-z\tau})^\alpha|\sin(\pi-\alpha\theta) \ge
c |\delta(e^{-z\tau})^\alpha| \ge c|z\tau|^{\alpha} .
\end{equation*}
Estimate \eqref{adjkjk} implies
$|\delta(e^{-z\tau})-\delta(e^{-{\rm i}|z|\tau \sin(\theta)})|\le c|\theta-\pi/2| ,$
and thus
\begin{align*}
|\delta(e^{-z\tau})^\alpha-\delta(e^{-{\rm i}|z|\tau \sin(\theta)})^\alpha|
&\le c|\theta-\pi/2| \min(|\delta(e^{-z\tau})|^{\alpha-1},|\delta(e^{-{\rm i}|z|\tau \sin(\theta)}|^{\alpha-1})\\
&\le c|\theta-\pi/2| |z\tau|^{\alpha-1} ,\qquad z\in\Gamma_\theta^\tau .
\end{align*}
Hence, if $z\in\Gamma_\theta^\tau$ and $\kappa\le |z|\tau  \sin(\theta) \le\pi$,
with $\theta$ close to $\pi/2$, we have
\begin{align*}
{\rm dist}(\delta(e^{-z\tau})^\alpha,\tau^\alpha S(A))
&\ge {\rm dist}(\delta(e^{-{\rm i}|z|\tau \sin(\theta)})^\alpha,\tau^\alpha S(A))
-|\delta(e^{-z\tau})^\alpha-\delta(e^{-{\rm i}|z|\tau \sin(\theta)})^\alpha|\\
&\ge c  - c|\theta-\pi/2| |z\tau|^{\alpha-1} \ge c - c|\theta-\pi/2| |z\tau\sin(\theta)|^{\alpha-1} \\
&\ge c - c|\theta-\pi/2| \max(\kappa,\pi)^{\alpha-1}
\ge c  \ge c|z\tau|^\alpha .
\end{align*}
Thus we have ${\rm dist}(\delta(e^{-z\tau})^\alpha,\tau^\alpha S(A))\ge c|z\tau|^\alpha$
for $z\in\Gamma_\theta^\tau$. This inequality and \eqref{djhh} yield
\eqref{resolvent-CFL} for $z\in\Gamma^\tau_{\theta,\delta}\cap\Gamma^\tau_\theta$.
Further, if $z\in\Gamma^\tau_{\theta,\delta}\backslash\Gamma^\tau_\theta$, then
$|z|=\delta$ and $-\theta<{\rm arg}(z)<\theta$, and Taylor expansion yields
$|\delta(e^{-z\tau})|^\alpha\le |z\tau|^\alpha \le \delta^\alpha\tau^\alpha .$
By choosing $\delta$ small, we have
\begin{equation*}
{\rm dist}(\delta(e^{-z\tau})^\alpha,\tau^\alpha S(A)) \ge
\lambda_{\min}\tau^\alpha-\delta^\alpha\tau^\alpha
\ge c\tau^\alpha ,
\end{equation*}
where $\lambda_{\min}$ is the smallest positive eigenvalue of the operator $A$ (which
can be made independent of $h$). This and \eqref{djhh}  yield
\begin{equation*}
\|(\delta_\tau(e^{-z\tau})^\alpha-A)^{-1}\|
\le c \le c\delta^{-\alpha}
=c|z|^{-\alpha}  ,\quad \forall\, z\in\Gamma^\tau_{\theta,\delta}\backslash\Gamma^\tau_\theta .
\end{equation*}
This completes the proof of \eqref{resolvent-CFL}.
\endproof

\bibliographystyle{abbrv}
\bibliography{frac}

\end{document}